\DeclarePairedDelimiter\floor{\lfloor}{\rfloor}
\theoremstyle{plain}
\newtheorem{theorem}{Theorem}[section]
\newtheorem{prop}[theorem]{Proposition}
\newtheorem{cor}[theorem]{Corollary}
\newtheorem{lemma}[theorem]{Lemma}
\theoremstyle{definition}
\newtheorem{defn}[theorem]{Definition}
\newtheorem{rmk}[theorem]{Remark}
\newtheorem*{ex*}{Example}
\newtheorem{problem}[theorem]{Problem}
\newcommand\sO{{\mathcal O}}
\newcommand\sK{{\mathcal K}}
\newcommand\sH{{\mathcal H}}
\newcommand\sP{{\mathcal P}}
\newcommand\sF{{\mathcal F}}
\newcommand\sG{{\mathcal G}}
\newcommand\sE{{\mathcal E}}
\newcommand\sI{{\mathcal I}}
\newcommand\sJ{{\mathcal J}}
\newcommand\sL{{\mathcal L}}
\newcommand{\ddim}{{\rm dim}\,}
\newcommand{\pic}[1]{{\rm Pic}^0(#1)}
\newcommand\rp{{\mathbf{P}}}
\newcommand\rz{{\mathbf{Z}}}
\newcommand\rr{{\mathbf{R}}}
\newcommand\rd{{\mathbf{D}}}
\subjclass[2010]{14K12, 14H45, 14H51}
\title[Theta-regularity of curves and Brill--Noether loci]
{Theta-regularity of curves and Brill--Noether loci}
\author{Luigi Lombardi and Wenbo Niu}
\address{Mathematical Institute\\ University of Bonn, Endenicher Allee 60, 53115, Germany}
 \email{\url{lombardi@math.uni-bonn.de}}
\address{Department of Mathematics\\ University of Notre Dame, 255 Hurley, Notre Dame, IN, 46556}
\email{\url{wniu@nd.edu}}
\begin{document}
\begin{abstract}
We provide a bound on the $\Theta$-regularity of an arbitrary reduced and irreducible 
curve embedded in a polarized abelian variety in terms of its degree and codimension.
This is an ``abelian'' version of Gruson--Lazarsfeld--Peskine's bound on the Castelnuovo--Mumford regularity of a 
non-degenerate curve embedded in a projective 
space. As an application, we provide a Castelnuovo type bound for the genus of a curve in a (non necessarily principally) polarized abelian variety. 
Finally, we bound the $\Theta$-regularity of a class of higher dimensional subvarieties in Jacobian varieties, \emph{i.e.} the 
Brill--Noether loci associated to a Petri general curve, extending earlier work of Pareschi--Popa. 
\end{abstract}

\maketitle

\section{Introduction}
Given a reduced and irreducible non-degenerate curve $C\subset \rp^r$ ($r\geq 3$) of degree $d$ contained in the projective $r$-space, 
natural problems that have been considered by Castelnuovo first (\emph{cf}. \cite[\S3.2 Theorem 1]{Sz}), 
and by Gruson--Lazarsfeld--Peskine later (\cite{GLP}), is 
to see for which integers $n=n(r,d)$ the following properties regarding the geometry of $C$ hold:
\begin{itemize}
\item[$(PA_n)$] The line bundle $\sO_C(n)$ is non-special;\\ 
 \item[$(PB_n)$] The homomorphism $H^0\big(\rp^r,\sO_{\rp^r}(n)\big)\rightarrow H^0\big(C,\sO_C(n)\big)$ is surjective, \emph{i.e.} hypersurfaces of degree $n$ trace 
 out a complete linear system on $C$;\\
 \item[$(PC_n)$] $C$ is cut out in $\rp^r$ by hypersurfaces of degree $n$.
\end{itemize}
% Property $(PA)$ was first considered and answered by Castelnuovo (\emph{cf}. \cite[\S3.2 Theorem 1]{Sz}), 
% while properties $(PB)$ and $(PC)$ have been worked out by Gruson--Lazarsfeld--Peskine in \cite{GLP} who also classify the extremal curves.
In particular, in \cite{Sz} and \cite{GLP} the following effective bounds have been proved:
\begin{flalign*}
 (PA_n) & \quad \mbox{holds \, for \, all }\quad n\geq \floor*{\frac{d-1}{r-1}},\\
 (PB_n) & \quad \mbox{holds \, for \, all }\quad  n\geq d-r+1,\\\
 (PC_n) & \quad \mbox{holds \, for \, all }\quad n\geq d-r+2.
 \end{flalign*}
 
The goal of this paper is to study analogous properties to $(PA)$, $(PB)$, and $(PC)$ for 
integral curves $C$ embedded in a polarized abelian variety $(A,\Theta)$. 
These properties are listed below and take into account the fact that abelian varieties have a ``continuous'' Picard
variety. Hence, as shown for instance
by work of \cite{GL}, \cite{PP1} and \cite{PP3}, it is natural and definitely 
more fruitful to consider the entire family $\{\sO_A(\Theta)\otimes \alpha\}_{\alpha\in {\rm Pic}^0(A)}$ of deformations   
rather than the polarization $\sO_A(\Theta)$ itself.
\begin{itemize}
\item[$(AA_n)$] The cohomology groups $H^1\big(C,\sO_C(n\Theta)\otimes \alpha_{|C}\big)$ vanish for all $\alpha \in \pic{A}$;\\
 \item[$(AA_n^h)$] There exists a closed algebraic subset $V\subset {\rm Pic}^0(A)$ such that ${\rm codim}\,V\geq h$ and
 $H^1\big(C,\sO_C(n\Theta)\otimes \alpha_{|C}\big)=0$ for all $\alpha\notin V$;\\
 \item[$(AB_n)$] The homomorphisms $H^0\big(A,\sO_{A}(n\Theta)\otimes \alpha \big)\rightarrow H^0\big(C,\sO_C(n\Theta)\otimes \alpha_{|C} \big)$ 
 are surjective for all $\alpha\in \pic{A}$;\\
 \item[$(AB_n^h)$] There exists a closed algebraic subset $V\subset {\rm Pic}^0(A)$ such that ${\rm codim}\,V\geq h$ and
 the homomorphisms $H^0\big(A,\sO_{A}(n\Theta)\otimes \alpha\big)\rightarrow H^0\big(C,\sO_C(n\Theta)\otimes \alpha_{|C}\big)$ are 
 surjective for all $\alpha\notin V$;\\
\item[$(AC_n^{{\rm lin}})$] $C$ is cut out by divisors linearly equivalent to $n\Theta$;\\
\item[$(AC_n^{{\rm alg}})$] $C$ is cut out by divisors algebraically equivalent to $n\Theta$.
  \end{itemize}

As in the case of curves embedded in projective spaces where Castelnuovo--Mumford regularity is shown to 
govern properties $(PA)$, $(PB)$, and $(PC)$, so in the case of curves in abelian varieties 
properties $(AA),\ldots, (AC)$ are controlled by a parallel notion of regularity called $\Theta$-\emph{regularity}. 
This notion was introduced by Pareschi--Popa in \cite{PP1} to study generation properties 
of coherent sheaves on abelian varieties and satisfies analogous properties to Castelnuovo--Mumford regular sheaves on projective spaces 
(\emph{cf}. Theorem \ref{riassunto} (iv) and \cite[Theorem 1.8.3]{Laz2}).
More concretely, we say that a closed subvariety $X\subset A$ of a polarized abelian variety $(A,\Theta)$ is $n$-$\Theta$-regular if 
$${\rm codim}_{{\rm Pic}^0(A)} V^i\big(\sI_X((n-1)\Theta)\big) \; > \; i \quad \mbox{ for \, all }\quad i \; > \;0$$
where 
$$V^i\big(\sI_X((n-1)\Theta)\big) \; := \; \big\{\alpha \in {\rm Pic}^0(A) \, | \, 
H^i\big(A,\sI_X((n-1)\Theta)\otimes  \alpha\big) \neq 0\big\}$$ are the non-vanishing loci and $\sI_X$ is the ideal sheaf of $X$.
To have a more global picture regarding computations of $\Theta$-regularity, we 
 mention that this has been only computed for two classes of curves 
 and a few higher-dimensional varieties.
For instance, Abel--Jacobi curves, Abel--Prym curves, 
Brill--Noether loci of type $W_d$ on a smooth curve, and Fano surfaces of lines associated to smooth cubic threefolds 
are $3$-$\Theta$-regular (however there are quite deep differences in these examples as 
both Abel--Jacobi curves and the $W_d$'s satisfy the stronger \emph{strongly} $3$-$\Theta$-\emph{regularity}; see Definition \ref{definition},
\cite[Theorem 4.1]{PP1}, \cite[Theorem A]{CMLV}, \cite[Theorem 1.2]{Ho}).\footnote{A further property that relates the Brill--Noether loci $W_d$ to 
Fano surfaces of lines is that they are both examples of subvarieties representing a minimal cohomology class  
$\frac{[\Theta]^d}{d!}\in H^{2d}(A,\rz)$. Moreover,
it is conjectured that these varieties should be the only ones that represent a minimal class and furthermore that 
strongly $3$-$\Theta$-regularity should provide 
a cohomological characterization for them (\cite[Conjecture 2.2]{PP2}).}
% We refer to the introduction of \cite{EGHP} for an analogue of this picture for the case of varieties of minimal degree in projective 
% spaces.}
Moreover, a result of Pareschi--Popa, extending to the ``abelian'' case 
a theorem of Bertram--Ein--Lazarsfeld, bounds  the $\Theta$-regularity of a smooth subvariety in terms of the degrees of its defining equations
(\cite[Theorem 6.5]{PP1}).

The main result of this paper 
is a bound on the $\Theta$-regularity of a reduced and irreducible curve on an abelian variety in terms of its codimension and degree. 
\begin{theorem}\label{intr-thm}
If $(A,\Theta)$ is a polarized abelian variety of dimension $r>1$ defined over an algebraically closed field
and $C\subset A$ is a reduced and irreducible curve of degree $d:=\big(\Theta \cdot C\big)={\rm deg}\, \sO_C(\Theta)$,
then $C$ is $(4d+2r+1)$-$\Theta$-regular. 
Moreover, if in addition $\Theta$ is globally generated, then $C$ is 
$(d+r+1)$-$\Theta$-regular.
\end{theorem}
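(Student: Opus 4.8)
The plan is to reduce to the case of a globally generated polarization and then to transplant, into the language of $\Theta$-regularity, the uniform position argument of Gruson--Lazarsfeld--Peskine. First I would derive the general bound from the globally generated one: $\sO_A(2\Theta)$ is globally generated for every ample $\Theta$, one has $\big(2\Theta\cdot C\big)=2d$, and for every integer $k\geq 1$ the statements ``$C$ is $k$-$(2\Theta)$-regular'' and ``$C$ is $(2k-1)$-$\Theta$-regular'' unwind to the identical requirement $\codim_{\pic{A}}V^i\big(\sI_C((2k-2)\Theta)\big)>i$ for all $i>0$, so $4d+2r+1$ is precisely $d+r+1$ read off for $(A,2\Theta)$. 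I would also reduce to $C$ non-degenerate: otherwise replace $(A,\Theta)$ by $(B,\Theta_{|B})$, $B$ the abelian subvariety generated by $C$, and check — via $\pic{A}\twoheadrightarrow\pic{B}$, the sequence $0\to\sI_{B/A}\to\sI_{C/A}\to\sI_{C/B}\to 0$, and generic vanishing on $B$ — that $n$-$\Theta_{|B}$-regularity of $C$ in $B$ forces $n$-$\Theta$-regularity of $C$ in $A$.

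Assume now $\Theta$ globally generated, $C$ non-degenerate, and set $m=d+r$. By Theorem~\ref{riassunto} it suffices to prove $\sI_C(m\Theta)$ is $M$-regular. Twisting $0\to\sI_C(m\Theta)\otimes\alpha\to\sO_A(m\Theta)\otimes\alpha\to\sO_C(m\Theta)\otimes\alpha_{|C}\to 0$ by $\alpha\in\pic{A}$, using that $\sO_A(m\Theta)\otimes\alpha$ is ample (so its higher cohomology vanishes) and that $\ddim C=1$, one obtains $V^i\big(\sI_C(m\Theta)\big)=\emptyset$ for $i\geq 3$, the identification $V^2\big(\sI_C(m\Theta)\big)=V^1\big(\sO_C(m\Theta)\big)$ (where $V^1\big(\sO_C(m\Theta)\big):=\{\alpha\in\pic{A}:H^1(C,\sO_C(m\Theta)\otimes\alpha_{|C})\neq 0\}$), and $V^1\big(\sI_C(m\Theta)\big)=\{\alpha:H^0(A,\sO_A(m\Theta)\otimes\alpha)\to H^0(C,\sO_C(m\Theta)\otimes\alpha_{|C})\ \text{not onto}\}$. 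Thus $M$-regularity of $\sI_C(m\Theta)$ reduces to two codimension estimates: $(\mathrm A)$ the ``abelian $(PB)$'' bound $\codim_{\pic{A}}V^1\big(\sI_C(m\Theta)\big)\geq 2$, and $(\mathrm B)$ the ``abelian $(PA)$'' bound $\codim_{\pic{A}}V^1\big(\sO_C(m\Theta)\big)\geq 3$ — precisely properties $(AB^2_m)$ and $(AA^3_m)$ of the Introduction.

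For $(\mathrm B)$ I would argue on the integral curve $C$: with $L:=\sO_A(\Theta)_{|C}$ of degree $d$, Serre duality on $C$ gives $V^1\big(\sO_C(m\Theta)\big)=\{\alpha:h^0(C,\omega_C\otimes L^{-m}\otimes\alpha_{|C})\neq 0\}$; pulling sections back to the normalization and using that $\pic{A}\hookrightarrow\pic{\widetilde C}$ is finite (here non-degeneracy enters), this locus maps finitely onto a subset of the image of the Abel map $\widetilde C^{(e)}\to\mathrm{Pic}^e(\widetilde C)$, $e=2p_a(C)-2-md$, hence has dimension at most $\max\{0,e\}$. Combined with a Castelnuovo-type \emph{a priori} bound on $p_a(C)$ in terms of $d$ and $r$ — the same circle of estimates the paper records afterwards as an application — this forces $e<0$, so $V^1\big(\sO_C(m\Theta)\big)=\emptyset$ and $(\mathrm B)$ holds.

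The heart of the matter is $(\mathrm A)$, where I would transplant Gruson--Lazarsfeld--Peskine. Take $D\in|\Theta\otimes\beta|$ general; by Bertini $D$ is smooth and $\Gamma:=C\cap D$ is $d$ reduced points. Adjunction gives $\omega_D=\sO_A(\Theta)_{|D}$, so $D$ has ample canonical bundle; hence $H^i\big(D,\sO_D(m'\Theta)\otimes\gamma_{|D}\big)=0$ for $i>0$ and $m'\geq 2$ (equally from the ambient sequence on $A$, which keeps this characteristic-free), so $H^i\big(D,\sI_{\Gamma/D}((m'\Theta)_{|D})\otimes\gamma_{|D}\big)=0$ for $i\geq 2$, while its $H^1$ records the failure of $\Gamma$ to impose independent conditions on $|(m'\Theta)_{|D}\otimes\gamma_{|D}|$. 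Since $D\in|\Theta\otimes\beta|$, restriction to $D$ yields for each $m'\geq 1$
\[
0\longrightarrow\sI_C\big((m'-1)\Theta\big)\otimes\alpha\beta^{-1}\longrightarrow\sI_C(m'\Theta)\otimes\alpha\longrightarrow\sI_{\Gamma/D}\big((m'\Theta)_{|D}\big)\otimes\alpha_{|D}\longrightarrow 0 .
\]
The crucial point is that $\Gamma$, a general member of the family $\{C\cap D_t\}_t$, is in \emph{uniform position}: by moving $\beta$ one shows the monodromy of the family is the full symmetric group, whence a Castelnuovo-type count bounds the ``regularity'' of the point set $\Gamma$ — it imposes independent conditions on $|m'\Theta\otimes\gamma|$ for $\gamma$ outside a codimension-$\geq 2$ locus in $\pic{A}$ once $m'\gtrsim\ceil*{\frac{d-1}{r-1}}$ — and supplies the matching ``$(AA)$'' bound computed on $\widetilde C$. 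Feeding these into the sequences above (cohomology on $A$ of ample twists vanishes above degree $0$, and Step $(\mathrm B)$ kills the $H^2$ terms), a Mumford-type restriction argument propagates the vanishing of $H^1\big(\sI_C(m'\Theta)\otimes\alpha\big)$ off codimension $2$ down to $m'=m=d+r$, which is $(\mathrm A)$. I expect the true obstacle to be exactly the abelian uniform position principle: one must show that a general theta-translate meets $C$ transversally in $d$ points whose behaviour with respect to the \emph{entire} family $\{\sO_A(m'\Theta)\otimes\alpha\}_\alpha$ is as generic as possible, and that the exceptional $\alpha$'s remain in codimension $\geq 2$ — this is where global generation of $\Theta$ and the positive-dimensional Picard variety are used essentially, and it has no counterpart in the purely projective setting.
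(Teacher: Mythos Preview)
Your reduction from the general polarization to the globally generated one via $2\Theta$ matches the paper exactly. Beyond that, your route diverges from the paper's and contains a genuine gap.

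The paper does not argue via hyperplane sections and uniform position at all. It adapts the \emph{other} argument in Gruson--Lazarsfeld--Peskine, the vector-bundle/Eagon--Northcott one. Working with the normalization $f:\widetilde C\to A$ and the graph $\Gamma\subset\widetilde C\times A$, it shows (Proposition~\ref{pencil}, via multiplication maps and Castelnuovo's pencil trick) that for a general line bundle $B$ on $\widetilde C$ of degree $g(\widetilde C)$ the sheaf $q_*\big(p^*(B\otimes f^*\sO_A(\Theta))\otimes\sI_\Gamma\big)\otimes\sO_A(\Theta)$ is $M$-regular, hence continuously globally generated. This yields (Proposition~\ref{en}) an Eagon--Northcott complex resolving a Fitting ideal $\sJ\subset\sI_C$, exact off $C$, with terms of the form $\bigoplus_t\sO_A\big(-(a+i)\Theta\big)\otimes\beta_{i_t}$ where $a=h^0(\widetilde C,B\otimes f^*\sO_A(\Theta))=d+1$ by Riemann--Roch. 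The bound $d+r+1$ then drops out of Lemma~\ref{lem}. No a~priori genus bound and no uniform-position principle is ever used.

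Your step $(\mathrm B)$ is circular. You want $e=2p_a(C)-2-(d+r)d$ to be negative, and you propose to obtain this from ``a Castelnuovo-type a~priori bound on $p_a(C)$ \dots\ the same circle of estimates the paper records afterwards as an application''. But in the paper the genus bound (Theorem~\ref{intr-cor2}/Theorem~\ref{thmbound}) is \emph{deduced from} Theorem~\ref{intr-thm} via Corollary~\ref{intr-cor}, not conversely. Worse, that bound --- and Debarre's independent one --- controls only the \emph{geometric} genus $g(\widetilde C)$, whereas what enters your Serre-duality computation on the singular curve $C$ is $p_a(C)$; the difference $p_a(C)-g(\widetilde C)$ is the $\delta$-invariant of the singularities and is not bounded in terms of $d$ and $r$ alone. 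So even granting circularity, the input you need is unavailable.

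Your step $(\mathrm A)$ is also problematic. The full-symmetric monodromy you invoke for $\{C\cap D_t\}$ is a characteristic-zero statement, while the theorem is asserted over an arbitrary algebraically closed field. And even in characteristic zero, $\Gamma$ sits in a theta divisor $D$, not in a projective space, so the classical Castelnuovo count $m'\gtrsim\lceil\frac{d-1}{r-1}\rceil$ you quote has no direct meaning there; one would first need a regularity theory for points in uniform position on $(D,\Theta_{|D})$, which is not in the literature and is not what the paper does.
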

As a consequence, we deduce effective versions of properties $(AA),\ldots, (AC^{{\rm alg}})$.
\begin{cor}\label{intr-cor}
 With notation as in Theorem \ref{intr-thm}, the curve $C$ satisfies properties 
 \begin{gather*}
 (AA_{n}^3), 
 \quad (AA_{n+1}), \quad (AB_{n}^2),  \quad  (AB_{n+1}), \quad (AC_{n+1}^{{\rm lin}}),  \quad  \mbox{and} \quad (AC^{{\rm alg}}_{n}) 
 \end{gather*}
 for all $n\geq 4d+2r$.
 Moreover, if $\Theta$ is globally generated, then we can take $n\geq d+r$.
\end{cor}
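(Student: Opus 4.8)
The plan is to deduce every item from the $\Theta$-regularity bound of Theorem~\ref{intr-thm} by translating cohomology of $\sO_C(n\Theta)\otimes\alpha_{|C}$ into cohomology of the twisted ideal sheaf $\sI_C(n\Theta)\otimes\alpha$. Concretely, for $n\geq 1$ and $\alpha\in\pic{A}$ one tensors the structure sequence of $C\subset A$ to get
\[
0 \longrightarrow \sI_C(n\Theta)\otimes\alpha \longrightarrow \sO_A(n\Theta)\otimes\alpha \longrightarrow \sO_C(n\Theta)\otimes\alpha_{|C} \longrightarrow 0,
\]
and since $\sO_A(n\Theta)\otimes\alpha$ is ample it has no higher cohomology. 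The long exact sequence then yields canonical isomorphisms
\[
\operatorname{coker}\big(H^0(A,\sO_A(n\Theta)\otimes\alpha)\to H^0(C,\sO_C(n\Theta)\otimes\alpha_{|C})\big)\ \cong\ H^1\big(A,\sI_C(n\Theta)\otimes\alpha\big)
\]
and $H^1(C,\sO_C(n\Theta)\otimes\alpha_{|C})\cong H^2(A,\sI_C(n\Theta)\otimes\alpha)$; moreover $V^i(\sI_C(n\Theta))=\emptyset$ for $i\geq 3$ because $C$ is a curve. Thus $(AB_n^h)$ is equivalent to $\codim V^1(\sI_C(n\Theta))\geq h$, property $(AA_n^h)$ to $\codim V^2(\sI_C(n\Theta))\geq h$, and $(AB_n)$ together with $(AA_n)$ is equivalent to $\sI_C(n\Theta)$ satisfying $\mathrm{IT}_0$.

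Set $m:=4d+2r+1$, or $m:=d+r+1$ when $\Theta$ is globally generated, so that $C$ is $m$-$\Theta$-regular by Theorem~\ref{intr-thm}; note $m-1\geq 1$. By the regularity formalism recalled in Theorem~\ref{riassunto}, $C$ is $k$-$\Theta$-regular for every $k\geq m$, equivalently $\sI_C(n\Theta)$ is $M$-regular for every $n\geq m-1$, so $\codim V^1(\sI_C(n\Theta))\geq 2$ and $\codim V^2(\sI_C(n\Theta))\geq 3$ in that range. By the previous paragraph this already gives $(AB_n^2)$ and $(AA_n^3)$ for all $n\geq m-1=4d+2r$ (resp.\ $n\geq d+r$). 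Since $M$-regular sheaves are continuously globally generated, $\sI_C(n\Theta)$ is continuously globally generated for $n\geq m-1$; spelling out the continuous evaluation map and twisting down by $n\Theta$ shows that $C$ is the scheme-theoretic intersection of the divisors algebraically equivalent to $n\Theta$ through it, which is $(AC_n^{\mathrm{alg}})$. Likewise Theorem~\ref{riassunto} gives that $\sI_C(k\Theta)$ is globally generated for $k\geq m$, so for $n\geq m-1$ the divisors in $|(n+1)\Theta|$ through $C$ cut out exactly $C$, which is $(AC_{n+1}^{\mathrm{lin}})$.

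It remains to prove $(AA_{n+1})$ and $(AB_{n+1})$ for $n\geq m-1$, i.e.\ that $\sI_C(k\Theta)$ satisfies $\mathrm{IT}_0$ for every $k\geq m$; this is the one step that is not formal from the codimension bounds — when $r\geq 3$ the bound $\codim V^2\geq 3$ does not force emptiness — and I expect it to be the main obstacle. The point is to upgrade the generic vanishing encoded in the $M$-regularity of $\sI_C((k-1)\Theta)$ to honest vanishing of $H^1$ and $H^2$ of $\sI_C(k\Theta)\otimes\alpha$ for all $\alpha$, after the single twist by the ample $\sO_A(\Theta)$. This is the abelian analogue of the fact that an $m$-regular sheaf on $\rp^r$ has no higher cohomology in degrees $\geq m$, and it belongs to the Pareschi--Popa regularity package (Theorem~\ref{riassunto}): the tensor product of an $M$-regular sheaf with $\sO_A(\Theta)$ is $\mathrm{IT}_0$; alternatively one can read this off from the Fourier--Mukai description of $M$-regular sheaves, or, as an independent route, obtain $(AA_{n+1})$ from a Castelnuovo-type bound on $p_a(C)$ via Serre duality on $C$ once $\deg(\sO_C((n+1)\Theta)\otimes\alpha_{|C})=(n+1)d>2p_a(C)-2$. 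Granting the $\mathrm{IT}_0$ vanishing, $V^1(\sI_C((n+1)\Theta))=V^2(\sI_C((n+1)\Theta))=\emptyset$ for $n\geq m-1$, and the first paragraph converts this into $(AA_{n+1})$ and $(AB_{n+1})$, completing the list with the ranges $n\geq 4d+2r$ and, in the globally generated case, $n\geq d+r$.
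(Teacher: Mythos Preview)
Your proof is correct and follows essentially the same route as the paper: both translate the six properties into conditions on $V^i(\sI_C(n\Theta))$ via the structure sequence, invoke Theorem~\ref{intr-thm} for $M$-regularity of $\sI_C((m-1)\Theta)$, and then use Theorem~\ref{riassunto} (i), (iii), (iv) to obtain continuous global generation, global generation, and the upgrade to $I.T.0$ after one further twist. Your write-up is more explicit about the dictionary between $(AA)$, $(AB)$ and the loci $V^1$, $V^2$; the only superfluous element is the hedging around $(AA_{n+1})$, $(AB_{n+1})$ and the mention of alternative routes---once you cite Theorem~\ref{riassunto}~(iii) (which you do), the $I.T.0$ of $\sI_C(k\Theta)$ for $k\geq m$ is immediate, exactly as in the paper.
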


Theorem \ref{intr-thm} was inspired 
 by the sharp bound $(d-r+2)$ obtained in \cite[Theorem 1.1]{GLP} for the Castelnuovo--Mumford regularity of a reduced and irreducible 
 non-degenerate curve 
 $C\subset \rp^r$ of degree $d$.
  We do not expect the bounds of Theorem \ref{intr-thm} to be sharp as in the projective case (for instance we do not assume any
 non-degeneracy's type condition), but they do confirm the linearity of the $\Theta$-regularity of curves and, moreover, 
 to the knowledge of the authors, they are the first bounds in this direction.
 
As an application of Corollary \ref{intr-cor}, we deduce a Castelnuovo genus type bound of a reduced and 
irreducible curve in a polarized abelian variety.
Previous results in this direction were given in \cite[Theorem 5.1]{De}, and furthermore in \cite[Theorem B]{PP2} for principally polarized 
abelian varieties (see Remark \ref{bounds}). 
Our bound improves the above ones in a few cases. For instance, 
in the case of abelian surfaces whose N\'{e}ron--Severi group is of rank one and generated by a globally generated line bundle, 
or abelian threefolds of the same type and with $h^0_{\rm min}\geq 6$ (see \eqref{h0min} for the definition of $h_{\rm min}^0$). 
We refer to Theorem \ref{thmbound} for 
the most general version of the following result.
\begin{theorem}\label{intr-cor2}
 Let $C$ be a reduced and irreducible curve of geometric genus $g$ embedded in a polarized abelian variety $(A,\Theta)$ of dimension $r>1$. 
 Denote by $\widetilde{C}$ the normalization of $C$ and by $f:\widetilde{C}\rightarrow A$ the induced morphism. Moreover define the non-negative 
 integer
 \begin{equation}\label{h0min}
 h^0_{\rm min}\; := \; {\rm min}\big\{h^0\big(\widetilde{C},f^*\big(\sO_A(\Theta)\otimes \gamma\big)\big)\, | \, \gamma \in \pic{A}\big\}.
 \end{equation}
 Then the following bound holds 
 $$g \; \leq \; (d-h^0_{{\rm min}}+1) \, (4d+2r+1).$$ Moreover, if in addition $\Theta$ is globally generated, then the bound
  $$g \; \leq\; (d-h^0_{{\rm min}}+1) \, (d+r+1)$$ holds and  
  $h^0_{{\rm min}}  \geq 2$ ($h^0_{{\rm min}}\geq 3$ if $\Theta$ is very ample).
 \end{theorem}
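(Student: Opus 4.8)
Set $N:=4d+2r+1$ in general, and $N:=d+r+1$ when $\Theta$ is globally generated. The plan follows the classical Castelnuovo pattern: use $\Theta$-regularity to kill the $H^1$ of all large twists restricted to $C$, push this vanishing down to the normalization $\widetilde{C}$, read off the relevant $h^0$ via Riemann--Roch, and then bound $g$ from below by a base-point-free pencil argument governed by $h^0_{\rm min}$. Concretely, by Theorem \ref{intr-thm} the curve $C$ is $N$-$\Theta$-regular, so Corollary \ref{intr-cor} yields property $(AA_m)$ for every $m\geq N$, that is $H^1\big(C,\sO_C(m\Theta)\otimes\alpha_{|C}\big)=0$ for all $\alpha\in\pic{A}$ and all $m\geq N$. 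Tensoring the normalization sequence $0\to\sO_C\to f_*\sO_{\widetilde C}\to\sT\to 0$, with $\sT$ a torsion sheaf supported on $\mathrm{Sing}\,C$, by the line bundle $\sO_C(m\Theta)\otimes\alpha_{|C}$, applying the projection formula and $H^1\big(C,\sT\otimes(-)\big)=0$, one gets a surjection
$$H^1\big(C,\sO_C(m\Theta)\otimes\alpha_{|C}\big)\;\longrightarrow\;H^1\big(\widetilde C,\,f^*(\sO_A(m\Theta)\otimes\alpha)\big)\;\longrightarrow\;0,$$
so the middle group vanishes for all $\alpha$ and all $m\geq N$. Since $\deg f^*\sO_A(m\Theta)=m(\Theta\cdot C)=md$ while $\deg f^*\gamma=0$ for $\gamma\in\pic{A}$, Riemann--Roch on $\widetilde C$ gives
$$h^0\big(\widetilde C,\,f^*(\sO_A(m\Theta)\otimes\alpha)\big)\;=\;md+1-g\qquad\text{for all }\alpha\in\pic{A},\ m\geq N.$$

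Next I would invoke the elementary fact that a line bundle $M$ with $h^0(M)\geq 1$ on a smooth projective curve satisfies $h^0(M^{\otimes m})\geq m\big(h^0(M)-1\big)+1$ for all $m\geq 1$: replacing $M$ by its moving part we may assume $M$ base-point-free, and then (when $h^0(M)\geq 2$) a base-point-free sub-pencil produces an exact sequence $0\to M^{-1}\to\sO_{\widetilde C}^{\oplus 2}\to M\to 0$; tensoring it successively by $M^{\otimes(m-1)}$ shows that the differences $h^0(M^{\otimes m})-h^0(M^{\otimes(m-1)})$ are non-decreasing, the first one being $h^0(M)-1$. Apply this to $M_\gamma:=f^*\big(\sO_A(\Theta)\otimes\gamma\big)$ with $\gamma$ chosen so that $h^0(M_\gamma)=h^0_{\rm min}$; as $M_\gamma^{\otimes N}=f^*\big(\sO_A(N\Theta)\otimes N\gamma\big)$, the displayed equality applies with $m=N$, $\alpha=N\gamma$. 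If $h^0_{\rm min}\geq 1$ we obtain $Nd+1-g=h^0(M_\gamma^{\otimes N})\geq N(h^0_{\rm min}-1)+1$, i.e. $g\leq N(d-h^0_{\rm min}+1)$, which is the asserted bound. If $h^0_{\rm min}=0$ the claim reduces to $g\leq N(d+1)$, and it suffices to exhibit a single effective line bundle of the required shape: choosing a translate $\Theta'$ of $\Theta$ with $C\not\subseteq\Theta'$, the bundle $f^*\sO_A(N\Theta')$ is effective of degree $Nd$ and is of the form $f^*\big(\sO_A(N\Theta)\otimes\alpha\big)$, so $Nd+1-g\geq 1$ and $g\leq Nd<N(d+1)$.

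It remains to check the lower bounds on $h^0_{\rm min}$ in the last sentence of the statement. If $\Theta$ is globally generated, then since the isogeny $\phi_\Theta\colon A\to\pic{A}$ is surjective every bundle $\sO_A(\Theta)\otimes\gamma$ is a translate $t_a^*\sO_A(\Theta)$, hence globally generated; therefore each $M_\gamma$ is a globally generated line bundle of positive degree $d$ on the curve $\widetilde C$, which forces $h^0(M_\gamma)\geq 2$, so $h^0_{\rm min}\geq 2$. If moreover $\Theta$ is very ample the same translates are very ample, so $C$ sits as a non-degenerate curve in a projective space; since an abelian variety contains no rational curve, $C$ is not a line and therefore spans at least a $\rp^2$, whence $h^0(M_\gamma)\geq h^0\big(C,(\sO_A(\Theta)\otimes\gamma)_{|C}\big)\geq 3$ and $h^0_{\rm min}\geq 3$.

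The genuine points to be careful about are the transfer of cohomological vanishing from $C$ to $\widetilde C$ — the torsion-sheaf bookkeeping in the normalization sequence, where one must observe that twisting by a line bundle is exact and does not affect the relevant $H^1$ — and the degenerate case $h^0_{\rm min}=0$, where the pencil lemma is not available and one has to fall back on an explicit effective divisor; everything else is a formal consequence of Theorem \ref{intr-thm} and Corollary \ref{intr-cor}.
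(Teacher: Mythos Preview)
Your argument is correct and follows the same route as the paper's proof of Theorem \ref{thmbound}: invoke Corollary \ref{intr-cor} to obtain the $H^1$-vanishing on $C$, transfer it to $\widetilde{C}$ via the normalization sequence, and combine Riemann--Roch with the inequality $h^0(M^{\otimes N})\geq N(h^0(M)-1)+1$ (which the paper cites as \cite[Lemma 5.5]{Ha}). The only cosmetic difference is that you apply the full $(AA_N)$ to the specific $\gamma$ realizing $h^0_{\rm min}$, whereas the paper uses the generic property $(AA_N^3)$ and an $\alpha\notin V$; both are supplied by Corollary \ref{intr-cor}, and your choice is in fact slightly cleaner (your separate treatment of the case $h^0_{\rm min}=0$ is harmless but unnecessary, since the inequality is vacuous there).
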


In the second part of the paper (see \S\ref{secwrd}), we provide a first answer to a question raised by Pareschi--Popa \cite[Question 7.21]{PP3}
asking for the least positive integer $n$ such that the Brill--Noether loci
$$W_d^r(C) \; = \; \{L\in {\rm Pic}^d(C)\, | \, h^0(C,L)\geq r+1\}$$ attached to a smooth curve $C$ 
are $n$-$\Theta$-regular (upon the choice of a point on $C$ we can think of these spaces as embedded in the Jacobian $J(C)$).
We bound this integer in the case $C$ is Petri general. As a byproduct we obtain a much better bound for 
the $\Theta$-regularity of Brill--Noether curves which in particular only depends on the codimension.

\begin{theorem}\label{intr-prop}
 Let $C$ be a Petri general curve of genus $g\geq 3$ and let $d,r$ be positive integers such that 
 ${\rm codim}\, W_d^r(C)=(r+1)(g-d+r)\geq 1$.
 Then $W_d^r(C)$ is $$\Big(2+4\floor*{\sqrt{{\rm codim}\, W^r_d(C)}} \Big)\mbox{-}\Theta\mbox{-regular}.$$ 
\end{theorem}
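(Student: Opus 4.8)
The plan is to reduce the statement to a bound on the $\Theta$-regularity of an auxiliary \emph{curve} inside $W_d^r(C)$, or more precisely to exploit the known structure of Brill--Noether loci on a Petri general curve to estimate the relevant non-vanishing loci $V^i$ directly. Recall that for a Petri general curve, $W_d^r(C)$ is reduced, irreducible (when its expected dimension is positive), of the expected dimension $g-(r+1)(g-d+r)$, and its singular locus is exactly $W_d^{r+1}(C)$; moreover the tangent cone and the local structure are governed by the Petri map, so one has a good handle on the sheaves $\mathcal{I}_{W_d^r(C)}$ via the resolution coming from the determinantal description. I would first set $k := \mathrm{codim}\, W_d^r(C) = (r+1)(g-d+r)$ and aim to show $W_d^r(C)$ is $\big(2+4\lfloor\sqrt{k}\rfloor\big)$-$\Theta$-regular, i.e. that $\mathrm{codim}\, V^i(\mathcal{I}_{W_d^r(C)}((n-1)\Theta)) > i$ for all $i>0$ when $n = 2+4\lfloor\sqrt{k}\rfloor$.

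The key steps, in order, are: \textbf{(1)} Use the result of Pareschi--Popa together with the determinantal resolution of $\mathcal{O}_{W_d^r(C)}$ (the Eagon--Northcott type complex built from the evaluation map of a Poincar\'e bundle on $C\times \mathrm{Pic}^d(C)$) to reduce the vanishing of the $H^i(\mathcal{I}_{W_d^r(C)}((n-1)\Theta)\otimes\alpha)$ to the vanishing of twisted cohomology of the terms of that complex, each of which is (a direct sum of) known bundles on the Jacobian pulled back from $\mathrm{Pic}^d(C)$. \textbf{(2)} Apply the theta-regularity calculus of Theorem \ref{riassunto} (iv) — the analogue of Castelnuovo--Mumford's ``$m$-regular implies $(m+1)$-regular'' and the behavior of regularity in exact sequences — to propagate regularity bounds through this complex, picking up one unit of $\Theta$-twist per step of the resolution. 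The number of steps is controlled by the codimension/expected rank drop, which is where the $\sqrt{k}$ enters: the determinantal locus has a resolution whose length is roughly the product of the matrix dimensions, but a sharper estimate using the ``generic'' ranks $r+1$ and $g-d+r$ (whose product is $k$) and the AM--GM inequality yields a bound of order $\sqrt{k}$ for the relevant quantity. \textbf{(3)} Combine with the base case that $W_d^r(C)$, being an Abel--Jacobi-type (indeed $W_d := W_d^0$) locus, is already known to be $3$-$\Theta$-regular when $k$ is small, and check that the formula $2+4\lfloor\sqrt{k}\rfloor$ majorizes the propagated bound for all $k\geq 1$ (for $k=1$ it gives $6$, comfortably above $3$).

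I expect the main obstacle to be \textbf{step (2)}: getting the $\Theta$-twists to add up to only $O(\sqrt{k})$ rather than the naive $O(k)$ one would obtain from a crude Eagon--Northcott resolution of length $\sim k$. The honest way to do this is to avoid resolving all the way down and instead truncate the complex, bounding the cohomology of a high syzygy sheaf by an \emph{inductive} argument on $(d,r)$ — using that the boundary $W_d^{r+1}(C)\subset W_d^r(C)$ has strictly smaller, and in fact quadratically smaller in the relevant sense, codimension data, so that the regularity of the pair improves. One then has to interface this induction cleanly with the exact sequence $0\to\mathcal{I}_{W_d^r}\to\mathcal{I}_{W_d^{r+1}}\to\mathcal{I}_{W_d^{r+1}/W_d^r}\to 0$ (or the analogous one for structure sheaves) and track how $\lfloor\sqrt{\cdot}\rfloor$ behaves under the relevant decrements; the elementary inequality $\lfloor\sqrt{a}\rfloor \le \lfloor\sqrt{b}\rfloor + (\text{small constant})$ when $a$ and $b$ differ in a controlled way is what makes the final numerology close, and verifying it in all boundary cases is the fiddly part. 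A secondary technical point is ensuring the Petri-generality hypothesis is used in the right places — namely to guarantee the determinantal loci have expected dimension and Cohen--Macaulay singularities, so that the Eagon--Northcott complex is actually a resolution and the cohomological bookkeeping is valid.
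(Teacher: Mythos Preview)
Your broad shape is right---one does use the determinantal description of $W_d^r$ as a degeneracy locus and resolves $\sI_{W_d^r}$ by a complex whose terms one can control---but the mechanism you propose for extracting the $\sqrt{k}$ is not the one that works, and the inductive scheme via $W_d^{r+1}\subset W_d^r$ is not needed and would not easily close.

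The paper's argument is as follows. Writing $W_d^r$ as the $(m+d-g-r)$-th degeneracy locus of $\gamma:E\to F$ with $E$ an $(m+d-g+1)$-th Picard bundle and $F$ a sum of topologically trivial line bundles, one gets (since $C$ is Petri general and the locus has expected codimension) a resolution of $\sI_{W_d^r}$ of length $g-\rho=k$ whose $i$-th term is
\[
K^i \;=\; \bigoplus_{I\in S_0(i)} S_{\bar I(m+d-g-r)}E \,\otimes\, S_{I(m+d-g-r)}F^*,
\]
a sum over partitions $I$ of weight $i$, with $\bar I(\cdot)$ the ``augmented'' conjugate partition. Two inputs then finish the job. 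First, $S_{\bar I(N)}E$ is a summand of $E^{\otimes |\bar I(N)|}$, and $|\bar I(N)| = i + \mathrm{rank}(I)\cdot N$; the maximum rank of a partition of weight $i$ is $\lfloor\sqrt{i}\rfloor$, so the largest tensor power of $E$ appearing in $K^i$ is $i+\lfloor\sqrt{i}\rfloor\cdot N$. \emph{This} is where the square root enters---it is pure partition combinatorics, not AM--GM on the factors $(r+1)$ and $(g-d+r)$. Second, one needs a regularity statement for tensor powers of Picard bundles: if $n\geq 4g-4$ then $E^{\otimes j}(\Theta)$ is $M$-regular for all $1\le j\le g-1$ (Proposition~\ref{ppbp}). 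Choosing $m$ so that the Picard bundle has degree $4g-4$, one divides the worst exponent $k+\lfloor\sqrt{k}\rfloor(4g-5-r)$ by $g-1$ and reads off the number of $\Theta$-twists required; the arithmetic in Corollary~\ref{corwrd} gives exactly $4\lfloor\sqrt{k}\rfloor$ plus a remainder, whence the bound $2+4\lfloor\sqrt{k}\rfloor$.

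What is missing from your proposal is therefore twofold: (a) the identification of the correct combinatorial source of $\sqrt{k}$ (ranks of partitions in the Schur-functor resolution, not the matrix dimensions or an induction on $(d,r)$), and (b) the Picard-bundle regularity input, which is the substantive analytic ingredient and requires its own argument via Fourier--Mukai and the cohomology of symmetric products. Your ``truncate and induct on $(d,r)$'' idea does not obviously produce the right numerology: passing from $W_d^r$ to $W_d^{r+1}$ changes the codimension by $g-d+2r+2$, which bears no simple relation to $\sqrt{k}$, and controlling $\sI_{W_d^{r+1}/W_d^r}$ would reintroduce exactly the difficulties you are trying to sidestep.
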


We refer to Theorem \ref{regwrd} and Corollary \ref{corwrd} for more efficient statements. Moreover, we mention that Theorem \ref{intr-prop} 
extends previous work carried out by Pareschi--Popa on 
the $\Theta$-regularity of Brill--Noether loci of type $W_{g-1}^1$ (\cite[Proposition 7.22]{PP3}). 
Finally, in Corollary \ref{bncurves}, we spell out effective bounds to properties $(AA),\ldots ,(AC)$ 
for the case of Brill--Noether curves. 

The strategy of the proof of Theorem \ref{intr-thm} goes as follows.
Similarly to the projective case, the key idea is to construct a resolution (which is exact away an 
algebraic subset of dimension one) 
of the ideal sheaf $\sI_C$ by locally free sheaves 
such that the $\Theta$-regularity of its terms is easy to compute. 
In order to do this, we use regularity theory of sheaves on abelian varieties as developed by Pareschi--Popa in 
\cite{PP1} and \cite{PP3}. More precisely, we notice that a resolution for $\sI_C$ is determined by the study of generation properties of sheaves 
of type 
\begin{equation}\label{intrgamma}
 q_*\big(p^*Q\otimes \sI_{\Gamma}\big)\otimes \sO_A(\Theta)
\end{equation}
where $p$ and $q$ are the projections from $\widetilde{C}\times A$ onto the first and second factor respectively, 
$\widetilde{C}$ is the normalization of $C$, and $\Gamma$ is the graph of the morphism 
$f:\widetilde{C}\rightarrow A$. One of the technical results we prove, which boils down to checking the surjectivity of certain multiplication maps 
on global sections, is that 
the sheaves in \eqref{intrgamma} are 
\emph{continuously globally generated} anytime $Q=B\otimes f^*\sO_A(\Theta)$ 
and $B$ is a general line bundle of degree $g(\widetilde{C})$ on $\widetilde{C}$. In other words, there exist a positive integer $N$ and line bundles 
$\alpha_1,\ldots ,\alpha_N\in {\rm Pic}^0(A)$ such that the sum of twisted evaluation maps
$$\bigoplus_{i=1}^N H^0\big(A,q_*(p^*Q\otimes \sI_{\Gamma})\otimes \sO_A(\Theta)\otimes \alpha_i\big)\otimes \alpha^{-1}_i \, 
\stackrel{\psi}{\longrightarrow}  \, 
q_*(p^*Q\otimes \sI_{\Gamma})\otimes \sO_A(\Theta)$$
is surjective (Proposition \ref{pencil} and Lemma \ref{intersection}).
Therefore, by considering an Eagon-Northcott complex associated to the composition 
of $\psi\otimes \sO_A(-\Theta)$ with the inclusion 
$q_*(p^*Q\otimes \sI_{\Gamma}) \subset H^0(\widetilde{C},Q)\otimes \sO_A$, one 
obtains a resolution of $\sI_C$ which is exact away from $C$ and whose terms are sums of line bundles of type $\sO_A(-i\Theta)\otimes \beta_{i_t}$ 
whit $\beta_{i_t}\in {\rm Pic}^0(A)$ (Proposition \ref{en}).

On the other hand, for the proof of Theorem \ref{intr-prop} we 
exploit the description of Brill--Noether loci as degeneracy loci of a morphism of vector bundles $\varphi:E\rightarrow F$ where 
$E$ is a Picard bundle and $F$ is a sum of topologically trivial line bundles.
It is a general fact that if a degeneracy locus has the expected dimension, then its ideal sheaf admits a resolution
whose terms are tensor products of Schur powers of the two bundles in play (\emph{cf}. for instance \cite{Lay}). 
Therefore, in our situation, the $\Theta$-regularity of $W_d^r(C)$ is computed as soon as one computes 
the $\Theta$-regularity of Picard bundles. This is the goal of Proposition \ref{ppbp} which in particular extends previous calculations of 
Pareschi--Popa on this subject (\cite[Proposition 7.15 and Remark 7.17]{PP3}).

\begin{problem}
 It would be interesting to give a uniform bound for the $\Theta$-regularity of an arbitrary subvariety of an abelian variety in terms 
 of its dimension and degree in order to study 
 higher-dimensional analogue of properties $(AA),\ldots ,(AC)$. In the projective setting the problem of bounding the Castelnuovo--Mumford 
 regularity of subvarieties of projective spaces goes under the name of Eisenbud--Goto's Conjecture (\cite{EG} and 
 \cite[Conjecture 1.8.47]{Laz2}). 
 This has been solved in the case of smooth surfaces and for certain threefolds by means of generic projections 
 (\emph{cf}. \cite{Laz1}, \cite{P}, and \cite{Ran}).
\end{problem}

 \subsection*{Notation} 
 If $\sF$ is a coherent sheaf on a variety $X$ and $D$ a Cartier divisor, we denote by 
 $\sF(D)$ the tensor product $\sF\otimes \sO_X(D)$ of $\sO_X$-modules.

\section{Preliminaries: regularity of sheaves on abelian varieties}\label{secprelim}

Let $(A,\Theta)$ be a polarized abelian variety defined over an algebraically closed field and let $\sF$ be a coherent sheaf on $A$. For 
each integer $i\geq 0$ we define the algebraic closed sets 
$$V^i(\sF) \; := \; \{\alpha\in {\rm Pic}^0(A) \, | \, h^i(A,\sF\otimes \alpha)>0\},$$ usually called \emph{cohomological support loci} or 
\emph{non-vanishing loci}. We recall some terminology from \cite{PP1}.
\begin{defn}\label{definition} 
Let $\sF$ be a coherent sheaf on a polarized abelian variety $(A,\Theta)$.
  \begin{itemize}
  \item[(i).] $\sF$ satisfies $GV$ (\emph{Generic Vanishing}) if ${\rm codim}_{{\rm Pic}^0(A)} \,V^i(\sF)\geq i$ for all $i>0$.\\
  \item[(ii).] $\sF$ is $M$-\emph{regular} (\emph{Mukai regular}) if ${\rm codim}_{{\rm Pic}^0(A)} \,V^i(\sF)>i$ for all $i>0$.\\
  \item[(iii).] $\sF$ satisfies $I.T.0$ (\emph{Index Theorem with index $0$}) if $V^i(\sF)=\emptyset$ for all $i>0$.\\
  \item[(iv).] $\sF$ is $n$-$\Theta$-\emph{regular} if $\sF\big((n-1)\Theta\big)$ is $M$-regular.\\ 
 \item[(v).] $\sF$ is \emph{strongly} $n$-$\Theta$-\emph{regular} if $\sF\big((n-1)\Theta \big)$ satisfies $I.T.0$.\\
 \item[(vi).] $\sF$ is \emph{continuously globally generated} if 
 there exists a positive integer $N$ such that for general line bundles $\alpha_1,\ldots, \alpha_N\in
  {\rm Pic}^0(A)$ the sum of twisted evaluation maps
  $$\bigoplus_{i=1}^N H^0(A,\sF\otimes \alpha_i)\otimes \alpha_i^{\vee} \; \longrightarrow \; \sF$$
 is surjective.\\
 \end{itemize}
\end{defn}
We also recall some properties of $M$-regular and $\Theta$-regular sheaves which we will need later on. 
\begin{theorem}\label{riassunto}
Let $(A,\Theta)$ be a polarized abelian variety and $\sF$ a coherent sheaf on $A$.
 \begin{itemize}
  \item[(i).] If $\sF$ is $M$-regular, then $\sF$ is continuously globally generated.\\
 \item[(ii).] If $\sF$ is continuously globally generated, then $\sF(m\Theta)$ is globally generated for all $m\geq 1$.\\
 \item[(iii).] If $\sF$ is $M$-regular, then $\sF(m\Theta)$ satisfies $I.T.0$ for all $m\geq 1$.\\
 \item[(iv).] If $\sF$ is $0$-$\Theta$-regular, then $\sF$ is globally generated and $m$-$\Theta$-regular for all $m\geq 1$. Moreover, 
 the multiplication maps
 $$H^0\big(A,\sF(\Theta)\big)\otimes H^0\big(A,\sO_A(k\Theta)\big)\; \longrightarrow \; H^0\big(A,\sF((k+1)\Theta)\big)$$ are surjective for all $k\geq 2$.
 \end{itemize}
\end{theorem}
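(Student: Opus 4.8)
The plan is to deduce all four statements from the Fourier--Mukai machinery of Pareschi--Popa: these are, in essence, results of \cite{PP1} (with (iv) the ``abelian'' analogue of the Castelnuovo--Mumford lemma, cf. \cite[Theorem 1.8.3]{Laz2}), so what follows is a sketch of the mechanism rather than a new argument. Write $\widehat{A}={\rm Pic}^0(A)$ and let $\mathbf{R}\widehat{S}\colon\mathbf{D}(A)\to\mathbf{D}(\widehat{A})$ be the Fourier--Mukai functor with kernel the Poincar\'e bundle. The key translation is that the codimension conditions defining $GV$, $M$-regularity and $I.T.0$ correspond, respectively, to: the transform of the derived dual $R\mathcal{H}om(\sF,\sO_A)$ being a single sheaf placed in degree $\dim A$; that sheaf being moreover torsion-free; and $\sF$ transforming in degree $0$ to a vector bundle. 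The generation statements are then extracted by a ``base-locus sweeping'' argument over $\widehat{A}$.

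For (i), assume $\sF$ is $M$-regular. Since ${\rm codim}\,V^i(\sF)>i\geq 1$ for $i>0$, a general $\alpha\in\widehat{A}$ has $H^i(A,\sF\otimes\alpha)=0$ for all $i>0$; the strict inequality (equivalently, torsion-freeness of the transform of the derived dual) is precisely what forces, for each fixed $x\in A$, the locus $\{\alpha\mid H^0(\sF\otimes\alpha)\text{ does not surject onto }\sF\otimes k(x)\otimes\alpha\}$ to have positive codimension. Hence for general $\alpha_1,\dots,\alpha_N$ with $N$ large the combined images $\sum_i{\rm Im}\big(H^0(\sF\otimes\alpha_i)\to\sF\otimes k(x)\big)$ fill the fibre at every $x$ at once, i.e. $\bigoplus_i H^0(\sF\otimes\alpha_i)\otimes\alpha_i^{\vee}\to\sF$ is surjective. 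For (ii), note first that $\sO_A(m\Theta)$ is itself continuously globally generated for every $m\geq 1$: if $m\geq 2$ it is base-point free by Lefschetz's theorem, while for $m=1$ the base locus ${\rm Bs}\,|\Theta_\alpha|$ is a translate of the proper closed set ${\rm Bs}\,|\Theta|$, so for a fixed $x$ a general $\alpha$ satisfies $x\notin{\rm Bs}\,|\Theta_\alpha|$. Now if $\sF$ is continuously globally generated, fix $x$ and pick general $\alpha_1,\dots,\alpha_N$ with $\sum_i{\rm Im}\big(H^0(\sF\otimes\alpha_i)\to\sF\otimes k(x)\big)=\sF\otimes k(x)$ and $x\notin{\rm Bs}\,|(m\Theta)_{\alpha_i^{\vee}}|$; products of sections of $\sF\otimes\alpha_i$ with sections of $\sO_A(m\Theta)\otimes\alpha_i^{\vee}$ are honest global sections of $\sF(m\Theta)$ already spanning its fibre at $x$ (here the one-dimensionality of the $\sO_A(m\Theta)$-fibre is used), so $\sF(m\Theta)$ is globally generated at the arbitrary point $x$.

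For (iii) one twists the Fourier--Mukai picture by $\sO_A(\Theta)$, which satisfies $I.T.0$ since every translate of $\sO_A(\Theta)$ has vanishing higher cohomology; combining the exchange formula for $\mathbf{R}\widehat{S}\big(\sF(\Theta)\big)$ with the fact that the derived dual of $\sF$ transforms to a single torsion-free sheaf ($M$-regularity) forces $H^i\big(A,\sF(\Theta)\otimes\alpha\big)=0$ for all $i>0$ and all $\alpha$, and iterating (or using that $\sO_A(m\Theta)$ is $I.T.0$) gives the statement for every $m\geq 1$. Finally, in (iv), $0$-$\Theta$-regularity of $\sF$ means $\sF(-\Theta)$ is $M$-regular, so (iii) applied to $\sF(-\Theta)$ shows $\sF\big((m-1)\Theta\big)$ satisfies $I.T.0$ --- hence is $M$-regular --- for every $m\geq 1$, which is exactly $m$-$\Theta$-regularity, while (i) and (ii) give that $\sF=\big(\sF(-\Theta)\big)(\Theta)$ is globally generated. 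The surjectivity of $H^0\big(\sF(\Theta)\big)\otimes H^0\big(\sO_A(k\Theta)\big)\to H^0\big(\sF((k+1)\Theta)\big)$ for $k\geq 2$ is Mumford's regularity induction transplanted to $A$: since $k\Theta$ is base-point free for $k\geq 2$, tensor the evaluation sequence $0\to M_{k\Theta}\to H^0(\sO_A(k\Theta))\otimes\sO_A\to\sO_A(k\Theta)\to 0$ by $\sF(\Theta)$ and use the $I.T.0$ vanishings just established to kill $H^1\big(A,M_{k\Theta}\otimes\sF(\Theta)\big)$. The only genuinely non-formal ingredient is (i): passing from the bare codimension inequalities to continuous global generation is where one needs the Fourier--Mukai characterization of $M$-regularity (torsion-freeness, not merely the codimension of the support of the transform) together with the sweeping argument; granted (i), parts (ii)--(iv) are formal consequences plus classical Castelnuovo--Mumford bookkeeping, the only subtlety being that $\Theta$ is assumed merely ample, which accounts for the $m\geq 1$ versus $k\geq 2$ thresholds.
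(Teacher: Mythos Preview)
The paper's own proof is nothing more than a list of citations to \cite{PP1}; your sketch of the underlying Pareschi--Popa mechanisms is a reasonable elaboration and is accurate for (i)--(iii) and for the first assertions of (iv).

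The one genuine gap is the multiplication-map part of (iv). You assert that the $I.T.0$ vanishings ``kill $H^1\big(A,M_{k\Theta}\otimes\sF(\Theta)\big)$'', but $I.T.0$ for the sheaves $\sF\big((m-1)\Theta\big)$ says nothing directly about cohomology after tensoring with the kernel bundle $M_{k\Theta}$. Indeed, from the twisted evaluation sequence (using $H^1(\sF(\Theta))=0$) one only gets
\[
H^1\big(A,M_{k\Theta}\otimes\sF(\Theta)\big)\;\simeq\;{\rm coker}\Big(H^0\big(\sF(\Theta)\big)\otimes H^0\big(\sO_A(k\Theta)\big)\longrightarrow H^0\big(\sF((k+1)\Theta)\big)\Big),
\]
which is precisely the group you are trying to show vanishes --- so the argument as written is circular. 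In \cite{PP1} this step is not handled by a bare $I.T.0$ vanishing but through their $M$-regularity theorem: one first obtains surjectivity of the \emph{continuous} multiplication
\[
\bigoplus_{\alpha\in U} H^0\big(\sF(\Theta)\otimes\alpha\big)\otimes H^0\big(\sO_A(k\Theta)\otimes\alpha^{-1}\big)\;\longrightarrow\; H^0\big(\sF((k+1)\Theta)\big)
\]
from $M$-regularity of $\sF(\Theta)$ together with continuous global generation of $\sO_A(k\Theta)$, and then a further (non-trivial) argument descends this to the single map at $\alpha=\sO_A$. Your Castelnuovo--Mumford-style outline has the right shape, but the crucial vanishing requires the full $M$-regularity machinery rather than $I.T.0$ alone.
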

\begin{proof}
 For (i) see \cite[Proposition 2.13]{PP1}; for (ii) see \cite[Proposition 2.12]{PP1};
 for (iii) see \cite[Proposition 2.9]{PP1}; for (iv) see \cite[Theorem 6.3]{PP1}. 
\end{proof}

\section{Regularity of curves in abelian varieties}\label{secproof}

Throughout the whole section we fix the following notation.
Let $A$ be an abelian variety of dimension $r>1$ defined over an algebraically closed field 
and let $\Theta$ be an ample and globally generated divisor on $A$.
Let $\iota:C\hookrightarrow A$ be a reduced and irreducible curve of geometric genus $g$ with
normalization $\nu:\widetilde{C}\rightarrow C$ and set 
$$f \, := \, \iota \circ \nu \, : \, \widetilde{C} \, \longrightarrow \, A \quad \quad  \mbox{ and }\quad \quad d \, := \, \deg \, 
\big( f^*\sO_A(\Theta) \big)\, = \, 
\big(\Theta \cdot C\big).$$
Finally, we denote by $p$ and $q$ the projections from $\widetilde{C}\times A$ onto the first and second factor respectively, and by 
$\Gamma$ the graph of $f$.
 
In order to compute the $\Theta$-regularity of $\sI_C$, we will 
show that it is enough to check the continuous global generation of sheaves of type
$q_* \big(p^*Q\otimes \sI_{\Gamma} \big)\otimes \sO_A(\Theta)$ where $Q$ is a globally generated
line bundle on $\widetilde{C}$ (see Proposition \ref{en}). 
Before doing so, we present a simple lemma along the lines of \cite[Lemma 1.6]{GLP}
computing cohomological support loci of sheaves admitting resolutions which are exact at most away an algebraic closed subset of dimension one.

\begin{lemma}\label{lem}
Let
$$\sE:\quad \cdots \longrightarrow \sE_2\stackrel{d_2}{\longrightarrow} \sE_1\stackrel{d_1}{\longrightarrow} \sE_0
\stackrel{d_0}{\longrightarrow} \sJ\longrightarrow 0$$
be a finite complex of coherent sheaves on a smooth projective irregular variety $Y$ such that $d_0$ is surjective.
If $\sE$ is exact away from an algebraic closed subset of dimension at most one, then there are 
inclusions 
$$V^i(\sJ) \, \subset \, V^i(\sE_0) \, \cup \, V^{i+1}(\sE_1) \, \cup \, \ldots \, \cup \, V^{\ddim Y}(\sE_{\ddim Y-i})\quad \mbox{ for any } 
\quad i \, \geq \, 1.$$
\end{lemma}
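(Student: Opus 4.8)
The plan is to break the complex $\sE$ into short exact sequences and then hypercohomology/spectral-sequence bookkeeping, with the dimension-one hypothesis used to control the discrepancy between $\sJ$ and the stupid truncations. First I would set $\sK_j := \ker(d_{j-1}) = \operatorname{im}(d_j)$ for $j\geq 1$ and $\sK_0 := \sJ$, so that we have short exact sequences
\begin{equation*}
0 \longrightarrow \sZ_j \longrightarrow \sE_j \longrightarrow \sK_j \longrightarrow 0, \qquad 0 \longrightarrow \sK_{j+1} \longrightarrow \sZ_j \longrightarrow \sH_j \longrightarrow 0,
\end{equation*}
where $\sZ_j = \ker(d_{j-1}|_{\sE_j})$ (for $j=0$ read $\sZ_0 = \ker d_0$, $\sK_0 = \sJ$) and $\sH_j$ is the homology sheaf at $\sE_j$. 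By hypothesis each $\sH_j$ is supported on a closed subset of dimension at most one, hence $V^i(\sH_j \otimes \alpha) = \emptyset$ for $i \geq 2$ and all $\alpha$, i.e. the only cohomology that survives a twist lives in degrees $0$ and $1$. Combining the two sequences, the connecting maps give, for each fixed twist $\alpha \in \operatorname{Pic}^0(A)$ and each $i \geq 2$, an isomorphism $H^i(\sK_{j+1}\otimes\alpha) \cong H^i(\sZ_j\otimes\alpha)$ and, from the first sequence, an inclusion $H^i(\sK_j\otimes\alpha) \hookrightarrow H^{i+1}(\sZ_j\otimes\alpha)$ provided $H^i(\sE_j\otimes\alpha)$ is allowed to contribute — more precisely $H^i(\sK_j\otimes\alpha)$ is squeezed between $H^i(\sE_j\otimes\alpha)$ and $H^{i+1}(\sZ_j\otimes\alpha)$.

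Next I would iterate: starting from $\sJ = \sK_0$ and $i \geq 1$, a nonvanishing class in $H^i(\sJ\otimes\alpha)$ forces either $H^i(\sE_0\otimes\alpha)\neq 0$ (giving $\alpha \in V^i(\sE_0)$) or $H^{i+1}(\sZ_0\otimes\alpha)\neq 0$; in the latter case, since $i+1\geq 2$, the homology sheaf $\sH_0$ contributes nothing in degree $i+1$, so $H^{i+1}(\sZ_0\otimes\alpha) \cong H^{i+1}(\sK_1\otimes\alpha)$, and we repeat the argument one step to the left with $i$ replaced by $i+1$. After $k$ steps we land on the alternative $\alpha \in V^{i+k}(\sE_k)$ or a nonvanishing class in $H^{i+k+1}(\sK_{k+1}\otimes\alpha)$. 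Since $Y$ has dimension $\ddim Y$, the cohomology $H^{>\ddim Y}$ vanishes, so the recursion must terminate: after at most $\ddim Y - i$ steps we are forced into one of the $V^{i+k}(\sE_k)$ with $0 \leq k \leq \ddim Y - i$. This yields exactly the asserted inclusion
\begin{equation*}
V^i(\sJ) \subset V^i(\sE_0) \cup V^{i+1}(\sE_1) \cup \cdots \cup V^{\ddim Y}(\sE_{\ddim Y - i}).
\end{equation*}

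Alternatively, and perhaps more cleanly to write up, I would phrase this via the two hypercohomology spectral sequences of the complex $\sE_\bullet \otimes \alpha$ (viewed as an object of the derived category quasi-isomorphic to $\sJ\otimes\alpha$ up to the bounded, dimension-$\leq 1$ error coming from the homology sheaves): the $E_2$ page $E_2^{p,q} = H^p(\sH_{-q}(\sE_\bullet)\otimes\alpha)$ has $E_2^{p,q} = 0$ for $p \geq 2$ except along $q = 0$ where it computes $H^p(\sJ\otimes\alpha)$, and the other spectral sequence has $E_1^{p,q} = H^q(\sE_{-p}\otimes\alpha)$ abutting to the same thing. Tracking which $E_1$ terms can kill a class in total degree $i$ on the abutment gives the inclusion directly. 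The main obstacle — really the only subtle point — is making precise how the dimension-one exactness defect is absorbed: one must check that the homology sheaves, being supported in dimension $\leq 1$, have vanishing $H^{\geq 2}$ after \emph{any} twist by $\alpha \in \operatorname{Pic}^0(A)$ (immediate from $\dim \operatorname{Supp} \leq 1$), and then verify that this is exactly what is needed to push the connecting-map/spectral-sequence argument through in cohomological degrees $\geq 2$, which is where all the relevant $V^i$ with $i \geq 2$ live; the degree-$1$ behavior of the homology sheaves is harmless because $V^1$ of the $\sE_j$'s is simply absorbed into the right-hand side. The irregularity of $Y$ plays no role beyond guaranteeing $\operatorname{Pic}^0(Y) \neq 0$ so that the twisting is meaningful; smoothness and projectivity are used only to have a well-behaved derived category and finite-dimensional cohomology.
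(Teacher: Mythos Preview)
Your approach is essentially identical to the paper's: chop the complex into short exact sequences of kernels, images, and homology sheaves, use that $\dim\operatorname{Supp}\sH_j\leq 1$ forces $V^i(\sH_j)=\emptyset$ for $i\geq 2$, and iterate the resulting inclusions until cohomological degree exceeds $\dim Y$. One notational slip to fix: you write $\sK_j := \ker(d_{j-1}) = \operatorname{im}(d_j)$, but these are \emph{not} equal (their discrepancy is precisely $\sH_{j-1}$); you clearly mean $\sK_j = \operatorname{im}(d_j)$, since you then correctly introduce $\sZ_j$ for the kernel and use the sequence $0\to\sK_{j+1}\to\sZ_j\to\sH_j\to 0$ --- and note that at each step you only need (and only get) a surjection $H^{i+k+1}(\sK_{k+1})\twoheadrightarrow H^{i+k+1}(\sZ_k)$, not the isomorphism you claim, but that suffices.
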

\begin{proof}
For any $j\geq 0$ we set
$$\sK_j \, := \, {\rm Ker}\,d_j,\quad \sI_j \, := \, {\rm Im}\, d_j, \quad \mbox{and}\quad 
 \sH_j \, := \, \sK_i/\sI_{j+1}.$$ We note that since $\dim {\rm Supp}\, \sH_j\leq 1$ we must have 
$$V^i(\sH_j) \, = \, \emptyset \quad \mbox{ for all }\quad j \, \geq \, 0\quad \mbox{ and } \quad i \, > \, 1.$$
We prove the lemma by chopping $\sE$ into short exact sequences 
$0\rightarrow \sK_j\rightarrow \sE_j\rightarrow \sI_j\rightarrow 0$ from which we get inclusions 
\begin{eqnarray}\label{inclV1}
V^i(\sI_j) \, \subset \, V^i(\sE_j)\cup V^{i+1}(\sK_j) \mbox{ for all } j\geq 0 \mbox{ and }i\geq 1.
\end{eqnarray}
Moreover, there are exact sequences 
$0\rightarrow \sI_{j+1}\rightarrow \sK_j\rightarrow \sH_j\rightarrow 0$ giving the further inclusions
\begin{eqnarray}\label{inclV2}
V^{i+1}(\sK_j) \, \subset \, V^{i+1}(\sI_{j+1})\cup V^{i+1}(\sH_j) \, = \, V^{i+1}(\sI_{j+1}) \mbox{ for all } j\geq 0 \mbox{ and } i \, \geq \, 1.
\end{eqnarray}
Therefore, by putting together \eqref{inclV1} and \eqref{inclV2}, we obtain inclusions for all $i\geq 1$
$$V^i(\sJ)  \subset  V^i(\sE_0)\cup V^{i+1}(\sE_1)  \cup  \ldots  \cup  V^{\ddim Y}(\sE_{\ddim Y-i})  \cup  
V^{\ddim Y+1}(\sK_{\ddim Y-i}),$$ 
which conclude the proof as $V^{\ddim Y+1}(\sK_{\ddim Y-i})=\emptyset$. 
\end{proof}

The following proposition can be considered as an analogous of \cite[Proposition 1.2]{GLP} in the case of 
abelian varieties.

\begin{prop}\label{en}
Let $Q$ be a globally generated line bundle on $\widetilde{C}$.
If the sheaf $q_*\big(p^*Q\otimes \sI_{\Gamma} \big) \otimes  \sO_A(\Theta)$ is continuously globally generated, then the ideal sheaf
$\sI_C$ is $\big(h^0(\widetilde{C},Q)+r\big)$-$\Theta$-regular.
\end{prop}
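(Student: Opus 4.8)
The plan is to build, out of the continuous global generation hypothesis on $\sG := q_*(p^*Q\otimes \sI_\Gamma)\otimes \sO_A(\Theta)$, an Eagon--Northcott-type complex of locally free sheaves resolving (away from $C$) the ideal sheaf $\sI_C$, and then to feed this complex into Lemma \ref{lem} to bound the non-vanishing loci $V^i(\sI_C((n-1)\Theta))$.

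First I would set up the geometry. Pushing the structure sequence $0\to\sI_\Gamma\to\sO_{\widetilde C\times A}\to\sO_\Gamma\to 0$ twisted by $p^*Q$ forward along $q$, and using that $q|_\Gamma:\Gamma\xrightarrow{\sim}\widetilde C$ composed with the second projection is $f$, one identifies $q_*(p^*Q\otimes\sO_\Gamma)$ with $f_*Q$, a sheaf supported on $C$. Since $Q$ is globally generated on $\widetilde C$, the evaluation $H^0(\widetilde C,Q)\otimes\sO_{\widetilde C\times A}\twoheadrightarrow p^*Q$ (pulled back from the first factor) shows $q_*(p^*Q\otimes\sI_\Gamma)\hookrightarrow q_*(p^*Q)=H^0(\widetilde C,Q)\otimes\sO_A$, so $\sG(-\Theta)=q_*(p^*Q\otimes\sI_\Gamma)$ is a subsheaf of a trivial bundle of rank $h:=h^0(\widetilde C,Q)$, with cokernel $f_*Q$ supported on $C$ (up to a term controlled by $R^1q_*$, which is supported in dimension $\le 1$ because the fibres of $q$ over $A\setminus C$ meet $\Gamma$ in at most one point — this is where the ``exact away from a set of dimension one'' comes from).

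Next, continuous global generation of $\sG$ means (Definition \ref{definition}(vi)) there are $\alpha_1,\dots,\alpha_N\in\pic A$ with $\bigoplus_i H^0(A,\sG\otimes\alpha_i)\otimes\alpha_i^\vee\twoheadrightarrow\sG$; twisting by $\sO_A(-\Theta)$ gives a surjection $\sW:=\bigoplus_i V_i\otimes(\sO_A(-\Theta)\otimes\alpha_i^\vee)\twoheadrightarrow q_*(p^*Q\otimes\sI_\Gamma)$, where $V_i$ are vector spaces. Composing with the inclusion into $H^0(\widetilde C,Q)\otimes\sO_A$ yields a map $\sW\to\sO_A^{\oplus h}$ whose image sheaf is $q_*(p^*Q\otimes\sI_\Gamma)$ and whose cokernel is (up to the dimension-$\le 1$ discrepancy above) $f_*Q$, a sheaf of rank $1$ on the curve $C$. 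Hence the degeneracy locus where this map of bundles drops rank from $h$ to $h-1$ is exactly $C$ (set-theoretically), and it has the expected codimension $\operatorname{rk}\sW-h+1$ in $A$ since $\dim C=1=\dim A-(r-1)$. The associated Eagon--Northcott complex then resolves $\sI_C$ away from $C$, with terms that are direct sums of $\wedge^a\sW\otimes(\text{sym powers of }\sO_A^{\oplus h})^\vee$, i.e. direct sums of line bundles of the form $\sO_A(-j\Theta)\otimes\beta$ with $\beta\in\pic A$, and the $j$-th term ($j\ge 1$) involving exactly $\sO_A(-j\Theta)\otimes\beta$ (the $\wedge^j$ of the $\sO_A(-\Theta)\otimes\alpha_i^\vee$ summands), for $j=1,\dots,h$ — note that the ``trivial-bundle'' side $\sO_A^{\oplus h}$ contributes no negative twist, so the $j$-th syzygy sheaf is a sum of copies of $\sO_A(-j\Theta)\otimes\beta_{j,t}$, and the complex has length $h$.

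Finally I would compute. Set $\sE_j = \bigoplus_t \sO_A(-j\Theta)\otimes\beta_{j,t}$ for $1\le j\le h$ and $\sE_0 = \sO_A$, the resolution of $\sJ=\sI_C$, exact away from a set of dimension $\le 1$. For a fixed $n$, twisting by $\sO_A((n-1)\Theta)$ turns $\sE_j$ into a sum of $\sO_A((n-1-j)\Theta)\otimes\beta$; as long as $n-1-j\ge 1$, i.e. $j\le n-2$, this sheaf satisfies $I.T.0$ (ample line bundles twisted by topologically trivial ones have no higher cohomology), so its non-vanishing loci $V^i$ vanish for $i>0$. Since the resolution has length $h$ and Lemma \ref{lem} (applied on $Y=A$, $\dim Y=r$) gives $V^i(\sI_C((n-1)\Theta))\subset V^i(\sE_0((n-1)\Theta))\cup\bigcup_{j\ge 1}V^{i+j}(\sE_j((n-1)\Theta))$ with the union running only up to $V^{r}$, we need: $V^i(\sE_0((n-1)\Theta))=\emptyset$ for $i\ge 1$ (true once $n-1\ge 1$), and $V^{i+j}(\sE_j((n-1)\Theta))$ contributes nothing to force codimension $>i$. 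Choosing $n = h+r$ makes $n-1-j\ge r\ge 1$ for all $j\le h$, so every $\sE_j((n-1)\Theta)$ satisfies $I.T.0$ and all these loci are empty; hence $V^i(\sI_C((n-1)\Theta))=\emptyset$ for all $i>0$, which is more than the required $\operatorname{codim}V^i>i$, so $\sI_C$ is $(h+r)$-$\Theta$-regular — in fact even strongly so. This gives the claimed bound $h^0(\widetilde C,Q)+r$.

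The main obstacle I anticipate is the bookkeeping around ``exact away from a one-dimensional set'': one must verify that replacing the honest cokernel $f_*Q$ by the Eagon--Northcott cokernel introduces only homology supported in dimension $\le 1$ (so that Lemma \ref{lem} applies), and that the degeneracy locus really has the expected codimension $r-1$ rather than something smaller — equivalently, that the map $\sW\to\sO_A^{\oplus h}$ generically has full rank $h$ (which follows since its cokernel is generically zero off $C$) and drops rank only along $C$. Handling $R^1q_*(p^*Q\otimes\sI_\Gamma)$ and the precise shape of the Eagon--Northcott terms (in particular confirming the $j$-th term is purely $\sO_A(-j\Theta)\otimes(\text{trivial})$ with no positive twists surviving) is the technical heart; the final cohomology vanishing is then routine.
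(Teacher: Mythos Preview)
Your strategy matches the paper's: build the map $\varphi$ from the source bundle $E$ (a direct sum of line bundles $\sO_A(-\Theta)\otimes\alpha_i^{-1}$) to $F:=H^0(\widetilde C,Q)\otimes\sO_A$, take the associated Eagon--Northcott complex resolving the $0$-th Fitting ideal $\sJ$ of $\mathrm{coker}\,\varphi$ away from $C$, and feed it into Lemma \ref{lem}. But you have the shape of the Eagon--Northcott complex wrong, and this invalidates your cohomology computation. With $a=h^0(\widetilde C,Q)$ and $e=\mathrm{rk}\,E$, the complex resolving $\sJ$ is
\[
0\longrightarrow \bigwedge^{e}E\otimes\mathrm{Sym}^{e-a}F^\vee\longrightarrow\cdots\longrightarrow\bigwedge^{a+1}E\otimes F^\vee\longrightarrow\bigwedge^a E\longrightarrow\sJ\longrightarrow 0,
\]
so the $j$-th term ($j\ge 0$) is a direct sum of line bundles $\sO_A\big(-(a+j)\Theta\big)\otimes\beta$, \emph{not} $\sO_A(-j\Theta)\otimes\beta$ as you write. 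The very first term already carries twist $-a\Theta$, not $-\Theta$; the length is $e-a$, not $h$; and there is no ``$\sE_0=\sO_A$'' term in a resolution of $\sJ$. With the correct twists and $n=a+r$, the $j$-th term becomes a sum of $\sO_A\big((r-1-j)\Theta\big)\otimes\beta_{j_t}$. Lemma \ref{lem} then gives $V^i\big(\sJ((a+r-1)\Theta)\big)=\emptyset$ for $i\ge 2$, but for $i=1$ the term with $j=r-1$ contributes loci of the form $V^r(\beta)$, which is a single point when $\beta$ is trivial. So one only obtains $\dim V^1\le 0$, i.e.\ $M$-regularity --- \emph{not} the strong $\Theta$-regularity you assert at the end. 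The bound $(h+r)$ is still correct, but the argument and the strength of the conclusion are not.

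Two smaller issues. First, $\sJ$ is the Fitting ideal of the image $\sG$ of the evaluation map; since $C$ is reduced one has $\sJ\subset\sI_C$ with quotient supported on the singular points of $C$, so the passage from $\sJ$ to $\sI_C$ is immediate --- but you cannot simply declare $\sJ=\sI_C$. Second, the degeneracy locus of $\varphi$ does \emph{not} have the expected codimension $e-a+1$ (nothing forces $e=a+r-2$, and $e$ is typically large); the Eagon--Northcott complex is exact away from $C$ because $\varphi$ is surjective off $C$, not by a codimension count, and this is exactly why Lemma \ref{lem} is invoked rather than plain acyclicity.
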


\begin{proof}
From the short exact sequence defining $\Gamma$
\begin{equation}\label{gamma}
0\longrightarrow \sI_{\Gamma}\longrightarrow \sO_{\widetilde{C}\times A}\longrightarrow \sO_{\Gamma}=
({\rm id}_{\widetilde{C}},f)_*\sO_{\widetilde{C}}\longrightarrow 0
\end{equation}
we deduce the following sequence 
\begin{equation}\label{ex}
0\longrightarrow q_*(p^*Q\otimes \sI_{\Gamma})\longrightarrow H^0(\widetilde{C},Q)\otimes \sO_A\stackrel{ev}{\longrightarrow} f_*Q
\longrightarrow R^1q_*(p^*Q\otimes \sI_{\Gamma})
\end{equation}
where, since $Q$ is globally generated, the evaluation map $ev$ may fail to be surjective at most at the singular points of $C$.
We denote by $\sG$ the image of $ev$.
Moreover, by hypotheses there exist an integer $N>0$ and line bundles $\alpha_1,\ldots ,\alpha_N\in {\rm Pic}^0(A)$ such that the sum of 
twisted evaluation maps
\begin{equation}\label{cggq}
\bigoplus_{i=1}^N H^0\big(A,q_*(p^*Q\otimes \sI_{\Gamma})\otimes \sO_A(\Theta)\otimes \alpha_i\big)\otimes \alpha_i^{-1}
\longrightarrow q_*(p^*Q\otimes \sI_{\Gamma})\otimes \sO_A(\Theta) 
\end{equation}
is surjective.
For simplicity we define
$$W_i:=H^0\big(A,q_*(p^*Q\otimes \sI_{\Gamma})\otimes \sO_A(\Theta) \otimes \alpha_i\big)\quad \mbox { for } \quad i=1,\ldots, N.$$ 
Therefore, we obtain a presentation of $\sG = {\rm Im}\, ({\rm ev}$):
\begin{equation}\label{succ}
\bigoplus_{i=1}^N W_i\otimes \alpha_i^{-1}
\otimes \sO_A(-\Theta) \stackrel{\varphi}{\longrightarrow} H^0(\widetilde{C},Q)\otimes \sO_A\longrightarrow \sG\longrightarrow 0
\end{equation}
by putting together 
\eqref{ex} and \eqref{cggq}.
Moreover set 
$$E:=\bigoplus_{i=1}^N W_i\otimes \alpha_i^{-1}\otimes \sO_A(-\Theta), \, F:=H^0(\widetilde{C},Q)\otimes \sO_A,\, e:={\rm rank}\, E,\,  
\, a:=\ddim H^0(\widetilde{C},Q)$$
and denote by $\sJ\subset \sO_A$ the $0$-th Fitting ideal of $\sG$. Since $Q$ is globally generated, 
$\sG$ is supported on $C$. Moreover, as $C$ is reduced, there is an inclusion of sheaves 
$\sJ\subset \sI_C$ such that the quotient $\sI_C / \sJ$ may fail to be trivial at most at the (finitely many) singular points of $C$
(see \cite[p. 496]{GLP} for a similar argument). 
As $\sJ$ coincides with the image of $\wedge ^a \varphi$, there is an Eagon--Northcott complex associated to $\varphi:E\rightarrow F$ of 
the form (see \cite[Theorem B.2.2]{Laz2}, or \cite[\S 0]{GLP}): 
\begin{equation*}\label{ENseq}
0\longrightarrow\bigwedge ^{e} E \otimes {\rm Sym}^{e-a}F^{\vee}\longrightarrow \ldots \longrightarrow \bigwedge ^{a+1}E\otimes F^{\vee}
\longrightarrow\bigwedge ^a E\longrightarrow \sJ\longrightarrow 0
\end{equation*}
which is exact away from ${\rm Supp}\,(  {\rm coker}\, \varphi )= C$.
We note that, for all $i\geq 0$, the terms $E_i:=\bigwedge^{a+i} E \otimes {\rm Sym}^{e-a-i}F^{\vee}$ of the resolution can be written as 
$$E_i \, \simeq \, \bigoplus_{t} \sO_A\big((-a-i)\Theta\big)\otimes \beta_{i_t}\quad  \mbox{ for some }\quad \beta_{i_t}\in \pic{A}.$$
Moreover, as $\sI_C/\sJ$ is supported at finitely many points, the sheaf $\sI_C$ is $(a+r)$-$\Theta$-regular as soon as 
$\sJ$ is $(a+r)$-$\Theta$-regular. But an application of
Lemma \ref{lem} to the sequence \eqref{ENseq} provides inclusions of cohomological support loci for all $i>0$
$$V^i \big(\sJ\big((a+r-1)\Theta\big) \big) \, \subset \, \bigcup_{k=i}^{r} V^{k} \big(E_{k-i}\big((a+r-1)\Theta \big)\big)
\, = \, \bigcup_{k,t} V^{k}\big(\sO_A\big((r-k+i-1)\Theta \big) \otimes \beta_{k_t} \big)$$ 
from which we see that  
$$\ddim V^1\big(\sJ\big((a+r-1)\Theta \big) \big) \, = \, 0\quad \mbox{ and }\quad$$
$$V^i\big(\sJ\big((a+r-1)\Theta \big) \big) \, = \, \emptyset\quad \mbox{ for }\quad i \, > \, 1.$$ 
\noindent So in particular
$\sJ$ is $(a+r)$-$\Theta$-regular and the proposition is proved.
\end{proof}

In the next proposition we
will give sufficient conditions for the hypotheses of Proposition \ref{en} to hold (recall that $M$-regular sheaves are continuously 
globally generated by Theorem \ref{riassunto} (i)).

\begin{prop}\label{pencil}
If $B$ is a line bundle on $\widetilde{C}$ such that $f_*B$ is $M$-regular on $A$, then
$q_*\big(p^*(B\otimes f^*\sO_A(\Theta))\otimes \sI_{\Gamma}\big)\otimes \sO_A(\Theta)$ is also $M$-regular.
\end{prop}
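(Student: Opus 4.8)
The plan is to use the fundamental exact sequence \eqref{gamma} defining the graph $\Gamma$, twisted by $p^*(B\otimes f^*\sO_A(\Theta))$ and pushed forward by $q$, in order to relate the sheaf $q_*(p^*(B\otimes f^*\sO_A(\Theta))\otimes\sI_\Gamma)\otimes\sO_A(\Theta)$ to the two sheaves $H^0(\widetilde C, B\otimes f^*\sO_A(\Theta))\otimes\sO_A(\Theta)$ and $f_*(B\otimes f^*\sO_A(\Theta))\otimes\sO_A(\Theta)\simeq f_*B\otimes\sO_A(2\Theta)$. Concretely, tensoring \eqref{gamma} with $p^*(B\otimes f^*\sO_A(\Theta))$ and applying $q_*$ gives a four-term exact sequence
\begin{equation*}
0\longrightarrow q_*\big(p^*(B\otimes f^*\sO_A(\Theta))\otimes\sI_\Gamma\big)\longrightarrow H^0(\widetilde C,B\otimes f^*\sO_A(\Theta))\otimes\sO_A\stackrel{\mathrm{ev}}{\longrightarrow} f_*(B\otimes f^*\sO_A(\Theta))\longrightarrow R^1q_*(\cdots)\longrightarrow 0,
\end{equation*}
where I use the projection formula $q_*(p^*(\cdots)\otimes\sO_{\widetilde C\times A})=H^0(\widetilde C,\cdots)\otimes\sO_A$ and $q_*(({\rm id},f)_*\sO_{\widetilde C}\otimes p^*(\cdots))=f_*(B\otimes f^*\sO_A(\Theta))$. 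First I would split this into the short exact sequence $0\to\sN\to H^0(\widetilde C,B\otimes f^*\sO_A(\Theta))\otimes\sO_A\to\sG\to 0$ where $\sN:=q_*(p^*(B\otimes f^*\sO_A(\Theta))\otimes\sI_\Gamma)$ and $\sG$ is the image of $\mathrm{ev}$, together with $0\to\sG\to f_*(B\otimes f^*\sO_A(\Theta))\to\sR\to 0$ with $\sR$ supported on the singular locus of $C$, hence a finite set.

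Next I would twist everything by $\sO_A(\Theta)$ and chase cohomology support loci. The aim is to show $V^i(\sN(\Theta))=\emptyset$ for $i\geq 1$ and $\mathrm{codim}\,V^i(\sN(\Theta))>i$ — actually to get $M$-regularity I want the strict inequality, so I should aim to show $\sN(\Theta)$ even satisfies a strong vanishing or at least that $V^i$ is small enough. From the second short exact sequence, since $\sR$ is finite, $V^i(\sG(\Theta))=V^i(f_*B\otimes\sO_A(2\Theta))$ for $i\geq 1$; and $f_*B$ being $M$-regular, Theorem \ref{riassunto}(iii) gives that $f_*B\otimes\sO_A(2\Theta)=(f_*B)(2\Theta)$ satisfies $I.T.0$, so $V^i(\sG(\Theta))=\emptyset$ for $i\geq 1$. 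From the first short exact sequence twisted by $\sO_A(\Theta)$,
\begin{equation*}
0\longrightarrow\sN(\Theta)\longrightarrow H^0(\widetilde C,B\otimes f^*\sO_A(\Theta))\otimes\sO_A(\Theta)\longrightarrow\sG(\Theta)\longrightarrow 0,
\end{equation*}
and the long exact sequence in cohomology (twisted by $\alpha\in\mathrm{Pic}^0(A)$) gives $H^i(A,\sN(\Theta)\otimes\alpha)$ sandwiched between $H^{i-1}(A,\sG(\Theta)\otimes\alpha)$ and $H^i(A,\sO_A(\Theta)\otimes\alpha)^{\oplus h^0(B\otimes f^*\sO_A(\Theta))}$. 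Since $\sO_A(\Theta)$ satisfies $I.T.0$, the right-hand group vanishes for $i\geq 1$, so $H^i(A,\sN(\Theta)\otimes\alpha)\cong H^{i-1}(A,\sG(\Theta)\otimes\alpha)$ for $i\geq 2$, which vanishes by the previous step; and for $i=1$ we get $H^1(A,\sN(\Theta)\otimes\alpha)\hookrightarrow H^0(A,\sG(\Theta)\otimes\alpha)/\mathrm{im}$, coming precisely from the failure of surjectivity of $H^0(\mathrm{ev}\otimes\alpha)$ on global sections.

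So the real content — and the step I expect to be the main obstacle — is controlling $V^1(\sN(\Theta))$, i.e. showing $\mathrm{codim}_{\mathrm{Pic}^0(A)}\{\alpha: H^0(\mathrm{ev}\otimes\alpha) \text{ is not surjective onto } H^0(\sG(\Theta)\otimes\alpha)\}>1$. Equivalently, I must show that for $\alpha$ outside a codimension $\geq 2$ locus the multiplication-type map $H^0(\widetilde C,B\otimes f^*\sO_A(\Theta))\otimes H^0(A,\sO_A(\Theta)\otimes\alpha)\to H^0(\widetilde C,B\otimes f^*(\sO_A(2\Theta))\otimes f^*\alpha)$ is surjective (after identifying $H^0(\sG(\Theta)\otimes\alpha)$ with the appropriate subspace of sections of $f_*B\otimes\sO_A(2\Theta)\otimes\alpha$, using again that the cokernel $\sR$ is finite and that $f_*B(2\Theta)$ has $I.T.0$ so its $H^0$ matches that of $\sG(\Theta)$ up to the finite correction). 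To handle this I would invoke the surjectivity of multiplication maps for $M$-regular sheaves — the "abelian" analogue of Castelnuovo–Mumford's lemma on surjectivity of multiplication maps, which for $M$-regular $\mathcal F$ says $H^0(\mathcal F\otimes\alpha)\otimes H^0(\sO_A(\Theta)\otimes\beta)\to H^0(\mathcal F(\Theta)\otimes\alpha\otimes\beta)$ is surjective for general $\alpha$ — combined with a base-change/pullback argument to descend from $A$ to $\widetilde C$ via $f$. The delicate point is that we need this for a \emph{family} of twists and must bound the bad locus: I would argue that since $f_*B$ is $M$-regular on $A$, continuous global generation (Theorem \ref{riassunto}(i)) together with the standard argument producing surjectivity of multiplication maps off a proper closed subset gives that the bad locus for $\alpha$ has codimension strictly greater than $1$, which is exactly what $M$-regularity of $\sN(\Theta)$ requires. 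I expect that making this codimension bound precise — tracking how the "general $\alpha$" in continuous global generation translates into a genuine codimension estimate on the non-vanishing locus $V^1(\sN(\Theta))$ — is where the technical work lies, and it is likely deferred to or supported by the subsequent Lemma \ref{intersection} referenced in the introduction.
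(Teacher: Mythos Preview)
Your reduction is correct and matches the paper: the exact sequence coming from the graph, together with the fact that $\sO_A(\Theta)$ and $f_*B\otimes\sO_A(2\Theta)$ satisfy $I.T.0$, does give $V^i\big(\sN(\Theta)\big)=\emptyset$ for $i\geq 2$ and identifies $V^1\big(\sN(\Theta)\big)$ with the locus of $\alpha$ where the multiplication map
\[
m_\alpha\colon H^0\big(A,f_*B\otimes\sO_A(\Theta)\big)\otimes H^0\big(A,\sO_A(\Theta)\otimes\alpha\big)\longrightarrow H^0\big(A,f_*B\otimes\sO_A(2\Theta)\otimes\alpha\big)
\]
fails to be surjective. (Incidentally, you can skip the intermediate sheaf $\sG$: since $f_*B$ is $M$-regular, $f_*B\otimes\sO_A(\Theta)$ is globally generated, so the evaluation map is already surjective and the sequence is short exact.)

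The gap is in the final step. You propose to bound $\mathrm{codim}\,V^1\big(\sN(\Theta)\big)$ by invoking ``surjectivity of multiplication maps for $M$-regular sheaves'' and continuous global generation. Neither of these gives what you need. The multiplication map statement in Theorem \ref{riassunto}(iv) requires $k\geq 2$, whereas here you have a single copy of $\Theta$ on each factor; and continuous global generation only says that a \emph{sum} of twisted evaluation maps over general $\alpha_1,\ldots,\alpha_N$ is surjective, which yields no codimension estimate whatsoever on the failure locus of a single $m_\alpha$. You also suggest the bound might come from Lemma \ref{intersection}, but that lemma establishes the \emph{hypothesis} of the proposition (that $f_*B$ is $M$-regular for general $B$); it plays no role in the proof itself.

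What the paper actually does is prove the concrete inclusion
\[
(-1_{\mathrm{Pic}^0(A)})\big(V^1(\sN(\Theta))\big)\;\subset\;V^1(f_*B),
\]
which immediately gives $\dim V^1(\sN(\Theta))\leq\dim V^1(f_*B)\leq r-2$ from the $M$-regularity of $f_*B$. To obtain this inclusion the paper uses the kernel bundle $M_{\sO_A(\Theta)\otimes\alpha}$: surjectivity of $m_\alpha$ is governed by $H^1$ of $\iota^*\big(M_{\sO_A(\Theta)\otimes\alpha}\otimes\sO_A(\Theta)\big)\otimes\nu_*B$. Applying Castelnuovo's pencil trick (two general sections of $\sO_A(\Theta)\otimes\alpha$ generate on $C$, and the Snake Lemma expresses $\iota^*M_{\sO_A(\Theta)\otimes\alpha}$ as an extension of a trivial bundle by $\iota^*(\sO_A(-\Theta)\otimes\alpha^{-1})$) shows that nonvanishing of this $H^1$ forces $H^1(A,f_*B\otimes\alpha^{-1})\neq 0$, i.e.\ $\alpha^{-1}\in V^1(f_*B)$. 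This pencil-trick argument is the missing idea in your proposal.
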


\begin{proof}
We continue to denote by $\Gamma$ the graph of the morphism 
$f$, and by $p$ and $q$ the projections from $\widetilde{C}\times A$ onto the first and 
second factor respectively.
We define $$\sH \, := \, q_*\big(\sI_{\Gamma}\otimes p^*(B\otimes f^*\sO_A(\Theta))\big)$$ so that it is enough to check 
${\rm codim}_{\pic{A}}\, V^i\big(\sH\otimes \sO_A(\Theta)\big) >i $ for all $i>0$.

By Theorem \ref{riassunto} the sheaf $f_*B\otimes \sO_A(m\Theta)$ is globally generated and satisfies $I.T.0$
for all $m\geq 1$. Therefore, from the sequence \eqref{gamma} and the isomorphism 
\begin{equation}\label{isopf}
H^0 \big(\widetilde{C}, B\otimes f^*\sO_A(\Theta)\big) \,\simeq \,
H^0 \big(A,f_*B\otimes \sO_A(\Theta) \big)
\end{equation}
obtained with the use of projection formula, we obtain exact sequences 
\begin{gather}\label{exgamma3}
 0\rightarrow \sH \otimes \alpha(\Theta) \longrightarrow
H^0 \big(\widetilde{C},B\otimes f^*\sO_A(\Theta) \big)\otimes \alpha(\Theta) \longrightarrow 
f_*B\otimes \alpha(2\Theta) \longrightarrow 0
\end{gather}
for any $\alpha$ in $\pic{A}$.  
As both $\sO_A(\Theta)$ and $f_*B\otimes \sO_A(2\Theta)$ satisfy $I.T.0$, 
we deduce
$$V^i \big(\sH\otimes \sO_A(\Theta) \big) \, = \, \emptyset\quad \mbox{ for all }\quad i \, \geq \, 2$$ 
so that we are left with the study of the dimension of $V^1\big(\sH\otimes \sO_A(\Theta)\big)$.
To this end we denote by
\begin{equation}\label{mult}
m_{\alpha}: H^0 \big(A,f_*B\otimes \sO_A(\Theta) \big)\otimes H^0 \big(A,\sO_A(\Theta) \otimes \alpha \big)\longrightarrow
H^0 \big(A,f_*B\otimes \sO_A(2\Theta) \otimes \alpha \big)
\end{equation}
the multiplication map induced by \eqref{exgamma3} and \eqref{isopf}, and furthermore we point out the identification
\begin{eqnarray}\label{V1}
V^1\big(\sH\otimes \sO_A(\Theta)\big) \, = \, \{\alpha\in{\rm Pic}^0(A)\,\,\, | \,\,\, m_{\alpha}\mbox{ is not surjective}\}.
\end{eqnarray}
In order to conclude the proof, we claim that it is enough to prove the following inclusion of algebraic sets
\begin{equation}\label{inclusion}
 \big(-1_{\pic{A}}\big)\big(V^1(\sH\otimes \sO_A(\Theta))\big) \, \subset \, V^1(f_*B)
\end{equation}
where $\big(-1_{\pic{A}}\big):{\rm Pic}^0(A)\rightarrow {\rm Pic}^0(A)$ is the isomorphism mapping $\alpha$ to $\alpha^{-1}$.
In fact, this inclusion would immediately imply
$$\ddim V^1\big(\sH\otimes \sO_A(\Theta)\big) \, \leq \, \ddim V^1(f_*B) \, \leq \, r-2$$ as $f_*B$ is $M$-regular by hypotheses. 

Now we show the inclusion \eqref{inclusion}.
Since for any $\alpha\in \pic{A}$ the sheaf $\sO_A(\Theta)\otimes \alpha$ is globally generated, we get exact sequences 
\begin{gather}\label{evkernel}
0\longrightarrow M_{\sO_A(\Theta)\otimes \alpha}\longrightarrow H^0 \big(A,\sO_A(\Theta)\otimes \alpha \big)\otimes \sO_A\longrightarrow
\sO_A(\Theta) \otimes \alpha\longrightarrow 0
\end{gather}
where $M_{\sO_A(\Theta)\otimes \alpha}$ is the kernel bundle of the evaluation map of 
$\sO_A(\Theta)\otimes \alpha$.
Furthermore, by tensorizing \eqref{evkernel} by $\sO_A(\Theta)\otimes \nu_* B$, we get further exact sequences
\begin{gather}\notag 
0\longrightarrow \iota^* \big(M_{\sO_A(\Theta)\otimes \alpha}\otimes \sO_A(\Theta) \big)\otimes \nu_*B
\longrightarrow 
H^0 \big(A,\sO_A(\Theta)\otimes \alpha \big)\otimes \iota^*\sO_A(\Theta) \otimes \nu_*B
\stackrel{{\rm ev}_{\alpha}}{\longrightarrow} \\ \notag 
\iota^*(\sO_A(2\Theta) \otimes \alpha)\otimes \nu_*B \longrightarrow 0.
\end{gather}
We note now that the map induced by $ev_{\alpha}$ on global sections coincides with the multiplication map $m_{\alpha}$ introduced in 
\eqref{mult}. Hence, by using the identification \eqref{V1}, and from the fact that 
\begin{gather}\label{vanishing}
H^1 \big(C,\iota^* \sO_A(\Theta)\otimes \nu_*B \big) \, \simeq \, H^1 \big(A,f_*B\otimes \sO_A(\Theta) \big) \, = \, 0
\end{gather}
(here we used projection formula and the fact that
$f_*B\otimes \sO_A(\Theta)$ satisfies $I.T.0$), we see that 
\begin{equation}\label{incl2}
\alpha\in V^1 \big(\sH \otimes \sO_A(\Theta) \big)\quad \Longrightarrow \quad
H^1\big(C,\iota^* \big(M_{\sO_A(\Theta)\otimes \alpha}\otimes \sO_A(\Theta) \big)\otimes \nu_*B\big) \, \neq \, 0.
\end{equation}

Now pick an arbitrary element $\alpha \in V^1(\sH\otimes \sO_A(\Theta))$ and
set $$W:={\rm Im} \big( H^0 \big(A,\sO_A(\Theta)\otimes \alpha \big) \longrightarrow H^0 \big(C,\iota^*(\sO_A(\Theta) \otimes \alpha) \big)\big).$$ 
We note that $W$ generates the sheaf $\iota^*(\sO_A(\Theta) \otimes \alpha)$. Therefore, by Castelnuovo's pencil trick, 
we can pick two general sections $s_1$ and $s_2$ in $W$ 
such that they generate $\iota^*(\sO_A(\Theta)\otimes \alpha)$.
We have then a commutative diagram:

\centerline{ \xymatrix@=12pt{
 & 0  \ar[r] & \iota^* \big(\sO_A(-\Theta)\otimes  \alpha^{-1} \big)\ar[d]\ar[r] & \sO_{C}\oplus
\sO_{C}\ar[d]\ar[r]^{(s_1,s_2)} & \iota^*\big(\sO_A(\Theta) \otimes \alpha\big) \ar @{=}[d]\ar[r] & 0\\
& 0 \ar[r] & \iota^*M_{\sO_A(\Theta) \otimes \alpha}  \ar[r]  & H^0\big(A,\sO_A(\Theta) \otimes \alpha\big)\otimes \sO_{C} \ar[r] & 
\iota^*\big(\sO_A(\Theta)\otimes
\alpha\big) \ar[r] & 0.\\ }}
\noindent
By defining $$\widetilde{V}:=H^0\big(A,\sO_A(\Theta) \otimes \alpha \big)/ \langle s_1,s_2\rangle,$$ the
Snake Lemma produces an exact sequence
$$0\longrightarrow \iota^*\big(\sO_A(-\Theta) \otimes \alpha^{-1}\big) \longrightarrow \iota^*M_{\sO_A(\Theta) \otimes \alpha}
\longrightarrow \widetilde{V}\otimes \sO_C\longrightarrow 0,$$ and hence the exact sequence
$$0\longrightarrow  \iota^*\alpha^{-1}\otimes \nu_*B
\longrightarrow \iota^*\big(M_{\sO_A(\Theta) \otimes \alpha}\otimes \sO_A(\Theta) \big)\otimes \nu_*B
\longrightarrow \widetilde{V}\otimes \iota^*\sO_A(\Theta) \otimes \nu_*B\longrightarrow 0.$$
Now by \eqref{vanishing} and \eqref{incl2} 
we deduce 
$$H^1\big(C,\iota^*\alpha^{-1}\otimes \nu_*B \big)\; \simeq \; H^1\big(A,f_*B\otimes \alpha^{-1}\big) \; \neq \; 0$$ 
(once again we used projection formula) from which we finally get 
$\alpha^{-1}\in V^1(f_*B)$. Since the line bundle $\alpha\in V^1(\sH\otimes \sO_A(\Theta))$ was arbitrary, 
this concludes the proof of the inclusion \eqref{inclusion}, and hence of the proposition.

\end{proof}

Before proceeding with the proof of Theorem \ref{intr-thm}, we check when the hypotheses of Proposition \ref{pencil} hold. In other words
we establish which line bundles $B$ on the smooth curve $\widetilde{C}$ make the sheaf $f_*B$ $M$-regular on $A$.

\begin{lemma}\label{intersection}
 Let $D$ be a smooth and irreducible curve of genus $g\geq 2$, and let $f:D\rightarrow A$ be a morphism to
an abelian variety $A$ of dimension $r$.
If $B$ is a general line bundle on $D$ of degree $b\geq g-2$, then 
$\ddim V^1(f_*B)\leq g+r-b-2.$ 
In particular, the sheaf $f_*B$ is $M$-regular (\emph{resp.} satisfies $I.T.0$) on $A$ if $B$ is a general line bundle of degree 
$b\geq g$ (\emph{resp.} $b\geq g+r-1$).
\end{lemma}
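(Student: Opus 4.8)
The plan is to translate the computation of $V^1(f_*B)$ into a question about a Brill--Noether locus on $D$, and then to run a general--position argument for translates inside the Jacobian $J(D)$.

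First I would reduce to the curve. Since $f$ is finite onto its image (we may assume $f$ non-constant, the other case being trivial), the projection formula gives $H^i\big(A,f_*B\otimes \alpha\big)\cong H^i\big(D,B\otimes f^*\alpha\big)$ for every $\alpha\in \pic{A}$, so $V^i(f_*B)=\emptyset$ for $i\geq 2$ (as $D$ is a curve) and only $V^1$ is at issue. Setting $e:=2g-2-b=\deg(\omega_D\otimes B^{-1})$, Serre duality identifies $V^1(f_*B)$ with $\{\alpha\,|\,h^0(D,\omega_D\otimes B^{-1}\otimes f^*\alpha^{-1})>0\}$, that is, with $u^{-1}(Z_B)$, where $u:=f^*\colon \pic{A}\to J(D)$ is the pullback homomorphism and $Z_B\subset J(D)$ is a translate of $(-1)\cdot W_e(D)$ (here $W_e(D)$ denotes the image of $\mathrm{Sym}^e D$ in $\mathrm{Pic}^e(D)$). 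If $b>2g-2$ then $e<0$, $W_e(D)=\emptyset$ and $V^1(f_*B)=\emptyset$, so the bound is vacuous; I may thus assume $g-2\leq b\leq 2g-2$, hence $0\leq e\leq g$.

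Next I would record the two ingredients of the count. (a) The image $\widehat A:=u(\pic{A})$ is an abelian subvariety of $J(D)$ of some dimension $k\leq r$, and $u$ is surjective onto $\widehat A$ with all fibres of dimension $r-k$; hence $\dim u^{-1}(Y)=\dim(Y\cap \widehat A)+(r-k)$ for any closed $Y\subseteq J(D)$ meeting $\widehat A$, and $u^{-1}(Y)=\emptyset$ otherwise. (b) As $B$ runs over $\mathrm{Pic}^b(D)$ the class $\omega_D\otimes B^{-1}$ runs over all of $\mathrm{Pic}^e(D)$, so, after fixing a base point, $B\mapsto Z_B$ is an isomorphism onto $J(D)$ identifying $Z_B$ with a translate $W'+t$ of a fixed subvariety $W'\subset J(D)$ with $\dim W'=\dim W_e(D)=e$ (using $0\leq e\leq g$), the point $t$ being general precisely when $B$ is general. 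I would then invoke the classical general--position lemma: for closed subvarieties $Z,S$ of an abelian variety $P$ and $t\in P$ general, $\dim\big(Z\cap(S+t)\big)\leq \dim Z+\dim S-\dim P$ (the intersection being empty when the right-hand side is negative); this follows by inspecting the fibres of the difference morphism $Z\times S\to P$, $(z,s)\mapsto z-s$. Applied in $P=J(D)$ with $Z=\widehat A$ and $S=W'$, it gives $\dim(Z_B\cap\widehat A)\leq k+e-g$ for general $B$.

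Combining (a) and (b): for general $B$,
$$\dim V^1(f_*B)=\dim u^{-1}(Z_B)\;\leq\;(k+e-g)+(r-k)\;=\;e+r-g\;=\;g+r-b-2,$$
which is the asserted inequality. For the ``in particular'': $V^i(f_*B)=\emptyset$ for all $i\geq 2$ regardless of $b$, so $f_*B$ is $M$-regular as soon as $\dim V^1(f_*B)\leq r-2$, which by the bound holds once $b\geq g$; and $f_*B$ satisfies $I.T.0$ as soon as $V^1(f_*B)=\emptyset$, which by the bound (it forces $k+e-g<0$, whence $Z_B\cap\widehat A=\emptyset$) holds once $b\geq g+r-1$. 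The step that needs the most care is (b): one must check that ``$B$ general'' genuinely produces a general translate in $J(D)$, and one must carry along the possibility that $f^*$ has positive-dimensional kernel, so that $\widehat A\subsetneq J(D)$ and the $(r-k)$ correction term in (a) is really present; it is exactly this correction that makes the final bound independent of $k$.
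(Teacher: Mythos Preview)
Your argument is correct and follows essentially the same route as the paper: both reduce $V^1(f_*B)$ via projection formula and Serre duality to the preimage under $f^*$ of a translate of $W_{2g-2-b}(D)$ in $J(D)$, and then bound its dimension by a transversality-of-translates argument (the paper cites Kleiman's theorem applied to the fibre product $\gamma V^1(B)\times_{\pic{D}}\pic{A}$, while you prove the needed special case directly via the difference map and handle the fibre dimension of $f^*$ by hand). The two presentations are equivalent, and your explicit tracking of $k=\dim\widehat A$ makes transparent why the final bound is independent of the size of $\ker f^*$.
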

\begin{proof}
Without loss of generality, we can assume that $f$ in non-constant since in this case 
$V^1(f_*B)=\emptyset$ for any $B\in {\rm Pic}(D)$.
We note that 
the algebraic set $V^1(B)$ is irreducible as Serre duality yields an isomorphism $V^1(B)\simeq W_{2g-2-b}(D)$ 
(here $W_{2g-2-b}(D)$ is the Brill--Noether locus parameterizing line bundles on $D$ of degree $2g-2-b$ with at least one non-zero global section). 
Moreover, the algebraic group $\pic{D}$ acts on itself via translations: for any $\gamma \in \pic{D}$ we write $\gamma V^1(B)$ for 
the image of $V^1(B)$ under the action of $\gamma$. 
Then, by Kleiman's Transversality Theorem \cite[Theorem 2]{Kl}, there exists an open dense subset $V\subset \pic{D}$ such that for all $\gamma \in V$
the fiber product 
$\gamma V^1(B)\times_{\pic{D}}\pic{A}$ is either empty or of dimension $$\ddim V^1(B)+\dim \pic{A}-\dim \pic{D}=\ddim V^1(B)+r-g.$$
As $\gamma V^1(B)\simeq V^1(B\otimes \gamma^{-1})$ for any $\gamma \in V$, we obtain closed 
immersions $V^1(f_*(B\otimes\gamma^{-1}))\hookrightarrow V^1(B\otimes \gamma^{-1})\times_{\pic{D}}\pic{A}$
by the universal property of the fiber product and the projection formula. 
Hence, for any $\gamma\in V$, we have
\begin{eqnarray*}
\dim V^1\big(f_*(B\otimes \gamma^{-1})\big) & \leq & \ddim \big(\gamma V^1(B)\times_{\pic{D}}\pic{A}\big)\\
& = & \ddim V^1(B)+r-g\\
& = & \ddim W_{2g-2-b}(D)+r-g\\
& = & r+g-b-2.
\end{eqnarray*}
\end{proof}

At this point Theorem \ref{intr-thm} follows by putting together the previous lemmas and propositions.

\begin{proof}[Proof of Theorem \ref{intr-thm}.]
We focus first on the case $\Theta$ is globally generated.
By Lemma \ref{intersection} we can pick a line bundle $B$ on $\widetilde C$ of degree $g=g(\widetilde{C})$ such that $f_*B$ is $M$-regular on $A$.
Moreover, we can take $B$ sufficiently general so that $B\otimes f^*\sO_A(\Theta)$ is globally generated.
Hence by Proposition \ref{pencil} 
the sheaf $q_*\big(p^*(B\otimes f^*\sO_A(\Theta))\otimes \sI_{\Gamma}\big)\otimes \sO_A(\Theta)$ is
$M$-regular, and thus continuously globally generated by Theorem \ref{riassunto} (i). 
Therefore, we can apply Proposition \ref{en} with $Q=B\otimes f^*\sO_A(\Theta)$
which tells us that $\sI_C$ is $(h^0(\widetilde{C},Q)+r)$-$\Theta$-regular. But by Riemann--Roch's formula
$$h^0\big(\widetilde{C},B\otimes f^*\sO_A(\Theta)\big)=d+1$$ as 
$h^1(\widetilde{C}, B\otimes f^*\sO_A(\Theta))=0$ by \eqref{vanishing}.

If $\Theta$ is not globally generated, then by Lefschetz's theorem $2\Theta$ is (\cite[p. 57-58]{Mum}). 
Therefore, by the previous argument, $\sI_C\big((r+d')(2\Theta)\big)$ is $M$-regular where $d'={\rm deg}\,\big( f^*\sO_A(2\Theta)\big) 
= 2d$, and hence $\sI_C \big((4d+2r)\Theta \big)$ is $M$-regular. This is equivalent to saying that $\sI_C$ is $(4d+2r+1)$-$\Theta$-regular. 
\end{proof}

\begin{proof}[Proof of Corollary \ref{intr-cor}.]
The statements relative to properties $(AA)$ follow from Theorem \ref{intr-thm} and the fact that, 
for any integer $n\geq 2$, the ideal sheaf of a curve $\sI_C$ is $n$-$\Theta$-regular if and only if 
 properties $(AA_{n-1}^3)$ and $(AB_{n-1}^2)$ hold.
 Moreover, in this case $\sI_C (n\Theta)$ satisfies $I.T.0$ by Theorem \ref{riassunto} (ii),  
 and therefore property $(AA_{n})$ hold.
 
The statements relative to property $(AC^{{\rm lin}})$ follow
again from Theorem \ref{intr-thm} since  
if $\sI_C$ is $n$-$\Theta$-regular, then $\sI_C(n\Theta)$ is $0$-$\Theta$-regular, 
and hence globally generated (Theorem \ref{riassunto} (iv)). 
Finally, the statement relative to property $(AC^{{\rm alg}})$ is an application of the fact that $M$-regular sheaves are continuously globally generated
(Theorem \ref{riassunto} (i)).
 \end{proof}
\section{Genus bounds}\label{secbounds}

The following theorem proves and extends Theorem \ref{intr-cor2}.
 
 \begin{theorem}\label{thmbound}
 Let $(A,\Theta)$ be a polarized abelian variety of dimension $r>1$ defined over an algebraically closed field and  
  let $C\subset A$ be a reduced and irreducible curve of degree $d=\big( \Theta \cdot C\big)$ with normalization $\widetilde{C}$. Denote by
  $f:\widetilde{C}\rightarrow A$ the induced morphism and set
  \begin{gather*}
  h^0_{\rm min}\; := \; {\rm min}\Big\{h^0\Big(\widetilde{C},f^*\big(\sO_A(\Theta)\otimes \gamma\big)\Big)\, | \, \gamma \in \pic{A}\Big\},\\ \notag
 h^0_{\rm max} \; := \; {\rm max}\Big\{h^0\Big(\widetilde{C},f^*\big(\sO_A(\Theta)\otimes \gamma\big)\Big)\, | \, \gamma \in \pic{A}\Big\}.
 \end{gather*}
   Then the following bound holds for the geometric genus $g=g(\widetilde{C})$ of $C$:
  \begin{gather*}
   g \; \leq \; {\rm min}\big\{(d-h^0_{{\rm min}}+1)(4d+2r+1), (d-h^0_{{\rm max}}+1)(4d+2r+2)\big\}.
  \end{gather*}
Moreover, if in addition $\Theta$ is globally generated, then the following bound holds:
\begin{gather*}
 g \; \leq \; {\rm min}\big\{(d-h^0_{{\rm min}}+1)(d+r+1), (d-h^0_{{\rm max}}+1)(d+r+2)\big\},
\end{gather*}
and moreover $h^0_{\rm max}\geq h^0_{{\rm min}}\geq 2$ ($\geq 3$ if $\Theta$ is very ample).
\end{theorem}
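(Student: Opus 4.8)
The plan is to derive Theorem \ref{thmbound} as a consequence of Corollary \ref{intr-cor} together with a classical argument bounding the genus of a curve in terms of the gonality-type invariants $h^0_{\min}$ and $h^0_{\max}$. First I would fix a general $\gamma\in\pic{A}$ realizing (say) $h^0_{\min}$, set $L:=f^*(\sO_A(\Theta)\otimes\gamma)$, a line bundle of degree $d$ on $\widetilde C$, and consider the linear series $|L|$ of (projective) dimension $h^0_{\min}-1$. Since $\gamma\in\pic{A}$, the line bundle $\sO_A(\Theta)\otimes\gamma$ is ample, and applying Corollary \ref{intr-cor} to $C$ (whose degree with respect to $\Theta$ equals that with respect to $\Theta+\gamma$) we get that, for $n\geq 4d+2r$ (resp.\ $n\geq d+r$ when $\Theta$ is globally generated), the restriction $H^0(A,\sO_A(n\Theta)\otimes\alpha)\to H^0(C,\sO_C(n\Theta)\otimes\alpha_{|C})$ is surjective for $\alpha$ outside a locus of codimension $\geq 2$, and $H^1(\widetilde C, f^*(\sO_A((n+1)\Theta))\otimes\beta)=0$ for suitable twists $\beta$. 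The key point is that property $(AB^2_n)$ lets one compare $h^0(\widetilde C, L^{\otimes m})$ with $h^0(A,\sO_A(m\Theta)\otimes\alpha)$ after choosing $\alpha$ generically, while the non-speciality from $(AA_{n+1})$ pins down $h^0(\widetilde C, L^{\otimes(n+1)})=\deg(L^{\otimes(n+1)})+1-g=(n+1)d+1-g$ via Riemann--Roch.

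Next I would run the standard ``multiplication map'' estimate: the image of the multiplication map $\mathrm{Sym}^{n+1}H^0(\widetilde C,L)\to H^0(\widetilde C,L^{\otimes(n+1)})$ has dimension at least what one gets from a base-point-free pencil inside $|L|$ (Castelnuovo's pencil trick / the ``free pencil'' bound), namely at least $(n+1)(h^0(\widetilde C,L)-1)+1 = (n+1)(h^0_{\min}-1)+1$ when $h^0_{\min}\geq 2$. On the other hand this image sits inside $H^0(\widetilde C,L^{\otimes(n+1)})$, and surjectivity coming from the abelian side (property $(AB_{n+1})$ for the twisted polarization, combined with $(AA_{n+1})$ to make $L^{\otimes(n+1)}$ non-special) forces $h^0(\widetilde C, L^{\otimes(n+1)})=(n+1)d+1-g$. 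Comparing the two and taking $n=4d+2r$ (resp.\ $n=d+r$) yields $(n+1)d+1-g\geq (n+1)(h^0_{\min}-1)+1$, i.e.\ $g\leq (n+1)(d-h^0_{\min}+1)=(d-h^0_{\min}+1)(4d+2r+1)$, and likewise $(d-h^0_{\min}+1)(d+r+1)$ in the globally generated case. Running the same argument with $h^0_{\max}$ and the corresponding $\gamma$, and using the slightly weaker numerics there (one extra unit in $n$, since the generic-twist hypotheses are applied to a possibly special $L$), gives the parallel bounds with $(4d+2r+2)$ and $(d+r+2)$; the minimum of the two is then the asserted estimate.

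Finally, for the inequalities $h^0_{\max}\geq h^0_{\min}\geq 2$ (and $\geq 3$ when $\Theta$ is very ample) under the global generation hypothesis: $h^0_{\max}\geq h^0_{\min}$ is trivial from the definitions, so I only need $h^0_{\min}\geq 2$. Here I would argue that if $h^0(\widetilde C, f^*(\sO_A(\Theta)\otimes\gamma))\leq 1$ for some $\gamma$, then $f^*(\sO_A(\Theta)\otimes\gamma)$ is a degree-$d$ line bundle on $\widetilde C$ with at most one section, whereas $\sO_A(\Theta)\otimes\gamma$ globally generated implies its pullback under the non-constant morphism $f$ is globally generated on $\widetilde C$, hence has $\geq 2$ sections (a globally generated line bundle of positive degree on a curve has $h^0\geq 2$); the degree is positive because $d=(\Theta\cdot C)>0$. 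For the very ample case, $\sO_A(\Theta)\otimes\gamma$ very ample gives that $f^*(\sO_A(\Theta)\otimes\gamma)$ separates the points of $\widetilde C$ lying over distinct points of $f(\widetilde C)$, and since $f$ is generically injective (as $C$ is reduced and irreducible, $\widetilde C\to C$ is birational) this line bundle is birationally very ample, forcing $h^0\geq 3$ unless $\widetilde C$ is rational — but $g\geq$ something positive here, or more carefully one notes a degree-$d$ linear series of dimension $2$ mapping birationally onto a plane curve exists, giving $h^0\geq 3$; I would phrase this via the observation that a very ample $f^*(\sO_A(\Theta)\otimes\gamma)$ cannot be composed with a pencil.

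The main obstacle I anticipate is the precise bookkeeping in Step 2 about which generic twist $\alpha\in\pic{A}$ simultaneously realizes $h^0_{\min}$ on $\widetilde C$, makes $B\otimes f^*(\sO_A(\Theta)\otimes\gamma)$ globally generated (as needed to invoke Theorem \ref{intr-thm}/Corollary \ref{intr-cor} for the shifted polarization), and lies outside the bad loci $V$ of codimension $\geq 2$ appearing in $(AB^2_{n})$ — one must check these finitely many open/constructible conditions are jointly satisfiable, which uses that $\pic{A}$ is irreducible and that each condition is open and non-empty (non-emptiness for the global-generation condition being exactly the genericity statement proved in the previous section). The rest is the classical Castelnuovo argument, which is routine once the hypotheses of Corollary \ref{intr-cor} are available for $(A,\Theta\otimes\gamma)$; note that the numerical invariant $d$ is insensitive to the twist by $\gamma\in\pic{A}$, so the bounds $4d+2r+1$ etc.\ are genuinely the ones coming from Theorem \ref{intr-thm}.
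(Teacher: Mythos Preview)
Your approach matches the paper's: obtain non-speciality of $f^*(\sO_A(m\Theta)\otimes\alpha)$ on $\widetilde C$ from the $(AA)$ properties in Corollary~\ref{intr-cor}, bound $h^0(\widetilde C,L^{m})$ from below via the classical inequality $h^0(L^m)\geq m\,h^0(L)-(m-1)$ (the paper cites this as \cite[Lemma~5.5]{Ha}; your multiplication-map/pencil-trick formulation is the same thing), and compare with Riemann--Roch; the arguments for $h^0_{\min}\geq 2$ and $\geq 3$ also agree. Your references to $(AB_n)$ and $(AB^2_n)$ are superfluous --- only the $(AA)$ properties are used, and non-speciality alone already gives $h^0=(n{+}1)d+1-g$ --- so drop those, and note that the ``one extra unit'' in the $h^0_{\max}$ bound comes simply from invoking $(AA_{d+r+2})$ (valid for \emph{every} $\alpha$, hence for the special $\gamma$ realizing the maximum) in place of the generic $(AA^3_{d+r+1})$.
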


\begin{proof}
We prove the theorem in the case $\Theta$ is globally generated by stressing out the parts that should be changed accordingly 
in case $\Theta$ is not globally 
generated.

The fact that $h^0_{{\rm min}}\geq 2$ easily follows since $f$ is a finite morphism and 
$\sO_A(\Theta)\otimes \gamma$ is globally generated and ample for all $\gamma$.
Similarly, the bound $h^0_{{\rm min}}\geq 3$ follows as the restriction of a very ample line bundle is still very ample and there are 
no rational curves in an abelian variety.

 By Corollary \ref{intr-cor} $C$ satisfies $(AA_{d+r+1}^3)$ and therefore 
 there exists a closed subset $V\subset \pic{A}$ such that ${\rm codim}\, V\geq 3$ 
 and the line bundles 
 $\sO_C((d+r+1)\Theta)\otimes \alpha_{|C}$ are non-special for all $\alpha\notin V$. 
 (If $\Theta$ is not globally generated, we would need to work with the bundles $\sO_C \big((4d+2r+1)\Theta \big)\otimes \alpha_{|C}$ 
 instead.) 
 Denoting by $\iota:C\hookrightarrow A$ the closed immersion 
 and by $f:\widetilde{C}\rightarrow A$ the composition of the normalization $\nu:\widetilde{C}\rightarrow C$ with
 $\iota$, we see that the line bundles $f^*\big(\sO_A((d+r+1)\Theta)\otimes \alpha\big)$ are non-special too. This easily follows
 by tensorizing by $\iota^*\big(\sO_{A}((d+r+1)\Theta)\otimes \alpha\big)$ the following short exact sequence 
 $$0\longrightarrow \sO_C \longrightarrow \nu_*\sO_{\widetilde{C}}\longrightarrow Q\longrightarrow 0,$$ 
  where $Q$ is a sheaf supported at the singular points of $C$, and 
 by noting that the induced long exact sequence in cohomology, together with projection formula, yields
 $$H^1\big(\widetilde{C},f^*(\sO_A((d+r+1)\Theta)\otimes \alpha)\big) \simeq H^1\big(C, 
 \iota^* (\sO_A((d+r+1)\Theta)\otimes \alpha)\otimes \nu_*\sO_{\widetilde{C}}\big) = 0.$$
 This immediately gives
 $$\chi\big(f^*\sO_A((d+r+1)\Theta)\otimes \alpha)) \, = \, 
 h^0\big(\widetilde{C},f^*(\sO_A((d+r+1)\Theta)\otimes \alpha)\big)\; \mbox{ for all } \; 
 \alpha \notin V,$$
and moreover, by writing $\alpha \, = \, \gamma^{\otimes d+r+1}$ for some $\gamma \in \pic{A}$, we obtain that for all $\alpha \notin V$ 
\begin{gather*}
h^0\big(\widetilde{C},f^*(\sO_A((d +r +1)\Theta)\otimes \alpha)\big) \; \geq \; \\
(d+r+1) \, h^0(\widetilde{C},f^*(\sO_A(\Theta)\otimes \gamma)) \, - \, (d+r) \, \geq \, (d+r+1)\, h^0_{{\rm min}} \, - \, (d+r) 
\end{gather*}
where the first inequality is \cite[Lemma 5.5]{Ha}.
On the other hand, Riemann--Roch's formula gives 
 $$\chi \big(f^*(\sO_A((d+r+1)\Theta)\otimes \alpha)\big) \, = \, d \, (d+r+1) \, - \, g \, + \, 1,$$ from which we obtain the bound 
 $g\leq (d-h^0_{{\rm min}}+1)(d+r+1)$. (If $\Theta$ is not globally generated one obtains the bound 
 $g\leq (d-h^0_{{\rm min}}+1)(4d+2r+1)$ with the same strategy.)
 
 Now we prove the bound $g\leq (d-h^0_{{\rm max}}+1)(d+r+2)$.
 Exploiting the fact that $C$ satisfies $(AA_{d+r+2})$, this time the line bundles  
 $\sO_C \big((d+r+2)\Theta \big)\otimes \alpha_{|C}$ are non-special for all $\alpha \in \pic{A}$, and therefore so the
 $f^*\big(\sO_A((d+r+2)\Theta)\otimes \alpha\big)$ are. 
 Therefore, by writing $\alpha=\gamma^{\otimes d+r+2}$ for any $\alpha \in \pic{A}$ and by means of \cite[Lemma 5.5]{Ha}, we deduce that for all 
 $\gamma \in \pic{A}$:
  \begin{eqnarray*}
  d(d+r+2) \, - \, g \, +\, 1 & = & \chi\big(f^*\sO_A \big((d+r+2)\Theta \big)\otimes \alpha \big)\\
  & = & h^0 \big(\widetilde{C},f^*(\sO_A \big((d+r+2)\Theta \big)\otimes \alpha)\big)\\
  & \geq & (d+r+2) \, h^0 \big(\widetilde{C},f^*\sO_A(\Theta)\otimes \gamma \big) \, - \, (d+r+1).
  \end{eqnarray*}
  At this point the claimed bound follows at once.
 If $\Theta$ is not globally generated, then $C$ satisfies $(AA_{4d+2r+2})$ and therefore the line bundles 
 $\sO_C \big((4d+2r+2)\Theta \big)\otimes \alpha_{|C}$ are non-special. Therefore one concludes as in the globally generated case.
  \end{proof}

 \begin{rmk}\label{bounds}
 The bounds in Theorem \ref{thmbound} improve, in some particular cases, a previous genus bound 
 $$g(C) \; < \; \frac{(2d-1)^2}{2(r-1)}$$ 
 established by Debarre in \cite[Theorem 5.1]{De}. For instance, this is the case if 
 $A$ is an abelian surface and $\Theta$ is a globally generated line bundle on $A$ such that
 $NS(A) = \rz[\Theta]$. Other examples are provided by 
 polarized abelian threefold $(A,\Theta)$ with $\Theta$ globally generated, $NS(A) = \rz[\Theta]$, and 
 $h^0_{\rm min}\geq 6$.
 Another genus bound which is sharp in the case of principally polarized surfaces can be found in 
 \cite[Theorem B]{PP2}.
\end{rmk}

%  In general our bound imporves Debarre's bound if 
% $$h_{min}^0\geq \frac{2n^2d+2nd^2+2nd+2n^2-6d^2-3}{2nd+2n^2-2d-2}

\section{Regularity of Brill--Noether loci}\label{secwrd}
In this section we work over the field of the complex numbers.
Let $C$ be a smooth curve of genus $g\geq 3$ and denote by 
$$W_d^r(C) \; = \; W_d^r \; := \; \{L\in {\rm Pic}^d(C) \, | \, h^0(C,L)\geq r+1\}$$ the Brill--Noether loci 
endowed with their usual scheme structure given by means 
of Fitting ideals. Up to tensorizing line bundles with a fixed line bundle of degree $-d$, we think of these spaces as subvarieties 
of the Jacobian $\big(J(C),\Theta \big)$ of $C$. 
We aim to bound the $\Theta$-regularity of $W_d^r(C)$ for all $r$ and $d$ in case $C$ is Petri general. 
Before doing so we compute the $\Theta$-regularity of Picard bundles. 

\subsection{Regularity of Picard bundles}
Let $p$ and $q$ be the projections from the product $C\times J(C)$ onto the first and second factor respectively. 
We identify $J(C)$ with its dual abelian variety via the principal polarization $\Theta$
and denote by $\iota:C\hookrightarrow J(C)$ an Abel--Jacobi map. We denote by 
$$\Phi_{\sE}:\rd\big(C\big)\longrightarrow \rd\big(J(C)\big),\quad \sF\mapsto \rr q_*\big(p^*(\sF)\otimes \sE\big)$$ 
the Fourier--Mukai transform defined by the kernel $\sE:=(\iota \times {\rm id}_{J(C)})^*P$ where $P$
is a Poincar\'{e} bundle on $J(C)\times J(C)$. 

An $n$-th \emph{Picard bundle} $E$ ($n\geq 2g-1$) is a locally free sheaf of the form 
$$q_*\big(p^*L\otimes \sE \big)$$ 
where $L$ is a line bundle on $C$ of degree $n$. 
It is straightforward to check that $L$ satisfies $W.I.T.0$ with respect to $\Phi_{\sE}$, 
\emph{i.e.} the cohomology sheaves $H^i(\Phi_{\sE}(L))$ vanish for all $i\neq 0$, and that $E\simeq \widehat{\Phi_{\sE}(L)}:=H^0(\Phi_{\sE}(L))$. 

The regularity of Picard bundles has been studied in \cite[Remark 7.17]{PP3} where the authors give an effective bound on the degree $n$ so that the 
product $E^{\otimes k}\otimes \sO_{J(C)}(\Theta)$ $(1\leq k \leq g-1)$ satisfies $I.T.0$. 
Here we notice that if one is only interested in weaker notions of regularity, such as $M$-regularity or 
$GV$, then it is possible to take smaller values of $n$. 
The proof of the following proposition 
follows the proof of \cite[Proposition 7.15]{PP3}, the hints of \cite[Remark 7.17]{PP3}, and also
part of an argument of \cite[Proposition 4.4]{PP1}.

\begin{prop}\label{ppbp}
   Let $C$ be a smooth curve of genus $g\geq 2$ and let $E$ be an $n$-th Picard bundle.
   If $n\geq 4g-5$ (\emph{resp.} $n\geq 4g-4$), then for any $1\leq k\leq g-1$ the bundles $E^{\otimes k}(\Theta)$ satisfy $GV$ (\emph{resp.} are 
 $M$-regular).
\end{prop}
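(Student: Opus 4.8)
The plan is to reduce the statement about $E^{\otimes k}(\Theta)$ to a cohomological vanishing that can be checked on the curve $C$ via the Fourier--Mukai machinery. First I would record that, since $L$ satisfies $W.I.T.0$ with $\widehat L = E$, the base-change/projection-formula identities give, for every $\alpha\in\pic{J(C)}$,
$$H^i\big(J(C), E^{\otimes k}(\Theta)\otimes\alpha\big)\;\simeq\;H^i\big(J(C), \big(\widehat{L}\big)^{\otimes k}\otimes\sO_{J(C)}(\Theta)\otimes\alpha\big),$$
and the point is to compute the right-hand side by writing $(\widehat L)^{\otimes k}$ as a Fourier--Mukai transform of a sheaf on $C^{(k)}$ or, more convenient, of the sheaf $p_{1*}(\boxtimes$-type$)$ on the $k$-fold product — following verbatim the computation in \cite[Proposition 7.15]{PP3}. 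Concretely, for $1\le k\le g-1$ one has a sheaf $\sQ_k$ on $C^k$ (a twist of an exterior tensor power of $L$ by ideal sheaves of diagonals, exactly as in Pareschi--Popa) whose Fourier--Mukai transform computes $(\widehat L)^{\otimes k}$ up to a cohomological shift; concatenating the transforms and $\sO_{J(C)}(\Theta)\simeq\widehat{\sO_C(\text{pt})}$-type identities, the cohomology groups above become cohomology groups on $C^k$ of $\sQ_k\otimes(\text{line bundle depending on }\alpha)$.

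The next step is the numerics. One tracks exactly which twist of $L$ appears on each factor of $C^k$ after incorporating the extra $(\Theta)$ and the $\alpha$-twist; the ideal-sheaf-of-diagonal factors drop the effective degree by a bounded amount on each factor (this is precisely the bookkeeping in \cite[Remark 7.17]{PP3}, where $I.T.0$ forces the stronger bound $n\ge$ (something like) $g^2$ or at any rate a bound growing with $k$). For the weaker conclusions here — $GV$ rather than $I.T.0$ — it suffices that the relevant line bundle on each copy of $C$ be non-special for \emph{generic} $\alpha$, i.e. that its degree be $\ge g$ (for $M$-regularity) or $\ge g-1$ (for $GV$), and then a dimension count on the non-vanishing loci using the fibration structure $C^k\to$ (Jacobian directions) gives the codimension estimate ${\rm codim}\,V^i>i$ (resp.\ $\ge i$). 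Carefully carrying out the worst-case count — the diagonal corrections cost at most $k-1\le g-2$ in degree, and there is the $+1$ from $\Theta$ — yields $n-1-(g-2)\ge g-1$, i.e. $n\ge 2g-2$ for the degree condition on a single factor, but because the transform introduces an $H^i$ over the other $k-1$ factors (each contributing a curve's worth of possible $h^1$), the genuinely binding estimate is the one where all $k$ factors are simultaneously near-special, producing the stated $n\ge 4g-5$ (resp.\ $4g-4$); I would verify the constant $4$ by examining the $k=g-1$ case, which is the extremal one.

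The main obstacle I anticipate is the second step: setting up the Fourier--Mukai computation of $(\widehat L)^{\otimes k}$ cleanly enough that the twist appearing on each factor of $C^k$ is transparent, and then performing the dimension count on $V^i$ with the diagonal-ideal corrections handled correctly (it is easy to be off by $1$ in the degree, and hence off by a factor in the final bound). A secondary subtlety is that $GV$ and $M$-regularity are only "generic" statements, so rather than vanishing I must argue that the locus where $h^i>0$ has the right codimension; here I would invoke, exactly as in \cite[Proposition 4.4]{PP1}, the cohomology-and-base-change description of $V^i$ together with the fact that on $C^k$ the relevant family of line bundles, restricted to the Brill--Noether-type locus where some factor becomes special, has the expected codimension (using $g\ge 2$, and $g\ge 3$ is harmless). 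Once the curve-level vanishing/codimension bounds are in hand for the twists dictated by $n\ge 4g-5$ and $n\ge 4g-4$ respectively, translating back through $\Phi_\sE$ (which preserves the codimension of cohomological support loci up to the standard shift) gives the two assertions.
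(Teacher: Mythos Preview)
Your general strategy --- reduce via Fourier--Mukai to cohomology on $C^k$, then bound the codimension of the non-vanishing loci --- is the right one, and is exactly what the paper does. But the execution as you have sketched it has two concrete gaps that would prevent you from reaching the stated bounds.

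First, the Fourier--Mukai reduction is not set up correctly. There is no auxiliary sheaf $\sQ_k$ twisted by ideal sheaves of diagonals. The paper uses Mukai's exchange of Pontrjagin product and tensor product to identify $E^{\otimes k}\simeq \widehat{\pi_{k*}(L^{\boxtimes k})}$ (plain $L^{\boxtimes k}$, no diagonal twist), where $\pi_k:C^k\to J(C)$ is the sum map; one must first check $R^i\pi_{k*}(L^{\boxtimes k})=0$ for $i>0$. Mukai duality then converts $H^i\big(E^{\otimes k}(\Theta)\otimes\alpha\big)$ into $H^i\big(\pi_{k*}(L^{\boxtimes k})\otimes\sO_{J(C)}(-\Theta)\otimes\alpha^{-1}\big)$, i.e.\ into $H^i\big(C^k, L^{\boxtimes k}\otimes\pi_k^*(\sO_{J(C)}(-\Theta)\otimes\alpha^{-1})\big)$. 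The diagonal enters only at this point, through Izadi's explicit formula
\[
\pi_k^*\sO_{J(C)}(\Theta)\;\simeq\;(\omega_C\otimes A^{-1})^{\boxtimes k}\otimes\sO_{C^k}(-\Delta),\qquad \deg A=g-k-1,
\]
which you do not invoke and which is the key input.

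Second, and more seriously, your numerics are wrong. Your ``diagonal corrections cost at most $k-1$ in degree, plus $+1$ from $\Theta$, hence $n\ge 2g-2$'' does not recover the bound and is not the mechanism. After Serre duality and Izadi's decomposition of cohomology on the symmetric product, one finds that for $1\le i\le k$
\[
V^i\big(\pi_{k*}(L^{\boxtimes k})\otimes\sO_{J(C)}(-\Theta)\big)\;\subset\;W_{3g-3-n+k}(C),
\]
a single Brill--Noether locus of codimension $n+3-2g-k$ in $J(C)$, and $V^i=\emptyset$ for $i>k$. The $GV$ condition ${\rm codim}\,V^i\ge i$ for all $i\le k$ then reads $n+3-2g-k\ge k$, i.e.\ $n\ge 2g-3+2k$; taking the worst case $k=g-1$ gives exactly $n\ge 4g-5$ (and $>$ in place of $\ge$ gives $4g-4$ for $M$-regularity). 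Your factor-by-factor speciality picture does not produce this: the relevant line bundle on $C$ is $L^{-1}\otimes A^{-1}\otimes\omega_C^{\otimes 2}$ of degree $3g-3-n+k$, and the bound comes from requiring $W_{3g-3-n+k}$ to have codimension at least $k$, not from a ``simultaneously near-special'' count across factors.
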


\begin{proof}
We only prove the $GV$ness of $E^{\otimes k}(\Theta)$ as the other statement is completely analogous.
We can suppose that $E\simeq \widehat{\Phi_{\sE}(L)}$ where $L$ a line bundle of degree $n\geq 4g-5$ on $C$. 
Denote by $\iota : C \hookrightarrow J(C)$
the Abel--Jacobi map $x\mapsto \sO_C(x-x_0)$ where $x_0\in C$ is a point and let $\pi_k:C^k\rightarrow J(C)$ be the morphism $(y_1,\ldots,y_k)\mapsto \iota(y_1)+\ldots +\iota(y_k)$.
First of all we will show that 
\begin{equation}\label{vansim}
R^i\pi_{k*}(L^{\boxtimes k}) \; = \; 0\quad \mbox{ for all }\quad i \; > \; 0.
\end{equation}
Note that $\pi_k =u_k\circ \sigma_k $ where $\sigma_k:C^k\rightarrow C_k$ is the quotient map under the action of the $k$-symmetric group 
on $C^k$, $C_k$ is the $k$-fold symmetric product of $C$, and $u_k:C_k\rightarrow J(C)$ is the Abel--Jacobi map $(x_1,\ldots ,x_k)\mapsto 
\sO_C(x_1+\ldots +x_k-kx_0)$. 
According to \cite[Lemma 4.3.10]{Laz2} in order to show \eqref{vansim} it suffices to show the vanishings of 
$$H^i \big(C^k,L^{\boxtimes k}\otimes \sigma_k^*u_k^*\sO_{J(C)}(l\Theta) \big)\quad \mbox{ for }\quad i>0\quad \mbox{ and }\quad l>>0.$$
But these follow from Kodaira Vanishing Theorem since $L^{\boxtimes k}$ can be written as $\omega_{C^k}\otimes (L')^{\boxtimes k}$ 
where $L'$ is a line bundle on $C$ of positive degree and $\sigma_k^*u_k^*\sO_{J(C)}(l\Theta)$ is a nef line bundle on $C^k$.
Then, as $(L')^{\boxtimes k}$ is ample on $C^k$, the product $(L')^{\boxtimes k}\otimes \sigma_k^*u_k^*\sO_{J(C)}(l\Theta)$ 
is ample for all $l\geq 0$, and hence the desired vanishings follow. 

We now denote by $\Phi_{P}:\rd\big(J(C)\big)\rightarrow   \rd \big(J(C)\big)$ the Fourier--Mukai transform induced by a Poincar\'{e} bundle $P$.
We say that a complex $\sF$ satisfies $W.I.T.i$ (with respect to $\Phi_{P}$) if the cohomology sheaves
$H^j(\Phi_{P}(\sF))$ vanish for all $j\neq i$. 
In this case we denote by $\widehat \sF$ the sheaf such that 
$\Phi_P(\sF)\simeq \widehat{\sF}[-i]$, \emph{i.e.} $\widehat{\sF}=H^i(\Phi_P(\sF))$.
It is easy to check that $\pi_{k*}(L^{\boxtimes k})$ satisfies $I.T.0$, and hence $W.I.T.0$.
In fact, by using the Leray spectral sequence, the vanishings in \eqref{vansim}, and Kodaira vanishing we obtain 
$$H^i\big(J(C),\pi_{k*}(L^{\boxtimes k})\otimes \alpha \big) \, \simeq \, H^i\big(C^k,L^{\boxtimes k}\otimes \pi_{k}^*\alpha \big) \, = \, 0
\,\mbox{ for all }\, \,
\alpha \in {\rm Pic}^0(J(C)), \, \, i>0.$$
Moreover there are isomorphisms
\begin{equation*}\label{Epower}
\widehat {\pi_{k*}(L^{\boxtimes k})} \; \simeq \; \big(m_*((\iota_*L)^{\boxtimes k})\big)^{\widehat{ }} \; \simeq \; 
\widehat{\iota_*L}^{\otimes k} \; = \; E^{\otimes k}
\end{equation*}
where for the second isomorphism we used the exchange formula of Pontrjagin product and tensor product \cite[(3.7)]{Muk} 
(see also \cite[Lemma 7.16]{PP3}).
Therefore by Mukai's inversion theorem \cite[Theorem 2.2]{Muk} the bundle $E^{\otimes k}$ satisfies $W.I.T.g$ and furthermore
$$\widehat{E^{\otimes k}} \; \simeq \; (-1_{J(C)})^*\pi_{k*}(L^{\boxtimes k}).$$ Hence, for any $\alpha \in \pic{J(C)}$, 
we have the following chain of isomorphisms:
\begin{eqnarray*}
 H^i \big(J(C),E^{\otimes k}\otimes \sO_{J(C)}(\Theta)\otimes \alpha \big) & \simeq & {\rm Ext}^i \big(\sO_{J(C)}(-\Theta)\otimes \alpha^{-1},
 E^{\otimes k} \big) \\
 & \simeq & {\rm Ext}^i\big((\sO_{J(C)}(-\Theta)\otimes \alpha^{-1})^{\widehat{ }},\widehat{E^{\otimes k}}\big)\\
 & \simeq & {\rm Ext}^i\big((-1_{J(C)})^*(\sO_{J(C)}(\Theta)\otimes \alpha),(-1_{J(C)})^*\pi_{k*}(L^{\boxtimes k})\big)\\
 & \simeq & H^i\big(J(C),\pi_{k*}(L^{\boxtimes k})\otimes \sO_{J(C)}(-\Theta)\otimes\alpha^{-1}\big)
\end{eqnarray*}
(the third isomorphism is consequence of \cite[Theorem 3.13 (v)]{Muk}).
  
 We conclude that it is enough to prove the inequalities 
\begin{equation}\label{GVcond}
 {\rm codim}_{{\rm Pic}^0(J(C))} \, V^i\big(\pi_{k*}(L^{\boxtimes k})\otimes \sO_{J(C)}(-\Theta)\big) \geq i \quad \mbox{for all}\quad i>0.
\end{equation}
On the other hand, since $R^i\pi_{k*}(L^{\boxtimes k})=0$ for all $i>0$, the vanishing of
$$H^i \big(J(C), \pi_{k*}(L^{\boxtimes k})\otimes \sO_{J(C)}(-\Theta) \otimes \alpha\big)$$ where $\alpha \in {\rm Pic}^0(J(C))$ is equivalent to the 
vanishing of
\begin{equation}\label{van3}
H^i \big(C^k, L^{\boxtimes k}\otimes \pi_k^*\big(\sO_{J(C)}(-\Theta)\otimes \alpha \big)\big).
\end{equation}
Furthermore, as $k\leq g-1$ we recall from \cite[Appendix A.1]{Iz} the isomorphism: 
$$\pi_k^*\sO_{J(C)}(\Theta) \; \simeq \; (\omega_C\otimes A^{-1})^{\boxtimes k}\otimes \sO_{C^k}(-\Delta)$$ where 
$\Delta$ is the union of all diagonal divisors $\Delta_{i,j}:=\{(x_1,\ldots ,x_k) \, | \, x_i=x_j\}\subset  
C^k$ ($1\leq i<j\leq k$) and $A$ is a line bundle of degree $g-k-1$ on $C$.
Then the vanishing of the group in \eqref{van3} is equivalent to the vanishing of
\begin{equation*}\label{SD}
H^{k-i} \big(C^k, (L^{-1}\otimes A^{-1}\otimes \omega_C^{\otimes 2} )^{\boxtimes k}\otimes \sO_{C^k}(-\Delta)\otimes \sigma_k^*u_k^*\alpha^{-1}\big)
\end{equation*}
 where we used Serre duality and wrote $\pi_k = u_k\circ \sigma_k$.

At this point, as the bundle $(L^{-1}\otimes A^{-1}\otimes \omega_C^{\otimes 2})^{\boxtimes k}\otimes \sO_{C^k}(-\Delta)$ is invariant 
under the action of the symmetric group, it can be written as $\sigma_k^* \sL'_{k,E}$ where $\sL'_{k,E}$ is a line bundle on $C_k$ and 
$E$ is a divisor such that $\sO_{C}(E) \simeq L^{-1}\otimes A^{-1}\otimes \omega_C^{\otimes 2}$.
Therefore it is enough to check the vanishings of 
$$H^{k-i} \big(C_k,\sL'_{k,E}\otimes u_k^*\alpha \big)\quad \mbox{ for }\quad i>0$$ which, using again \cite{Iz}, are identified to 
\[
  {\rm Sym}^{k-i} H^1 \big(C,L^{-1}\otimes A^{-1}\otimes \omega_C^{\otimes 2}\otimes \iota^*\alpha \big)
\otimes \wedge ^{i}H^0 \big(C,L^{-1}\otimes A^{-1}\otimes \omega_C^{\otimes 2}\otimes \iota^*\alpha \big) 
\]
for $1\leq i \leq k-1$, and 
\[\wedge ^k H^0 \big(C,L^{-1}\otimes A^{-1}\otimes \omega_C^{\otimes 2}\otimes \iota^*\alpha \big)
  \]
  for $i=k$
(here $\iota^*:{\rm Pic}^0(J(C))\rightarrow {\rm Pic}^0(C)$ is the pull-back isomorphism).

In conclusion, as $\deg (L^{-1}\otimes A^{-1}\otimes \omega_C^{\otimes 2})=3g-3-n+k$, 
we have proved that 
$$V^i\big(\pi_{k*}(L^{\boxtimes k})\otimes \sO_{J(C)}(-\Theta)\big) \; \subset \; W_{3g-3-n+k}(C) \quad \mbox{ for }\quad 
i \, = \, 1,\ldots ,k$$ and 
$$V^i\big(\pi_{k*}(L^{\boxtimes k})\otimes \sO_{J(C)}(-\Theta)\big) \; = \; \emptyset \quad \mbox{ for }\quad i \, > \, k.$$ 
Therefore
$${\rm codim}_{{\rm Pic}^0(J(C))} \, V^i\big(\pi_{k*}(L^{\boxtimes k})\otimes \sO_{J(C)}(-\Theta)\big) \; \geq \; n+3-2g-k 
\quad \mbox{for}\quad i \, = \, 1,\ldots ,k$$
and one can easily check that conditions \eqref{GVcond} are true, for all $1\leq k \leq g-1$, as soon as $n\geq 4g-5$.
\end{proof}

 \begin{rmk}
 We recall that if $n\geq 4g-3$ then $E^{\otimes k}(\Theta)$ satisfies $I.T.0$ for all $1\leq ~ k\leq ~ g-1$ (\cite[Remark 7.17]{PP3}).
 \end{rmk}

\subsection{Regularity of Brill--Noether loci}
We bound the $\Theta$-regularity of the Brill--Noether loci $W_d^r$.
\begin{theorem}\label{regwrd}
Let $C$ be a Petri general curve of genus $g\geq 3$, and let $d>0$ and $r\geq 0$ be integers such that $0\leq \rho:=g-(r+1)(g-d+r)\leq g-1$. 
Write
\begin{equation}\label{divisione}
g \, - \, \rho+\floor*{\sqrt{g-\rho}} \, (4g-r-5)\; = \; k \, (g-1) \, + \, q
\end{equation}
for some integers $k\geq 0$ and $0\leq q < g-1$.
Moreover, set $s=1$ if $q>0$, and $s=0$ else.
Then $\sI_{W_d^r} \big((k+s)\Theta \big)$ is $M$-regular.
\end{theorem}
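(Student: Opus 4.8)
The plan is to realize $W_d^r$ as a degeneracy locus of a morphism of vector bundles on $J(C)$, to resolve $\sI_{W_d^r}$ by an Eagon--Northcott/Lascoux-type complex whose terms are built out of Picard bundles and topologically trivial line bundles, to control the $\Theta$-regularity of each term by means of Proposition \ref{ppbp}, and finally to conclude by applying Lemma \ref{lem} to that resolution.

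First I would set up the degeneracy locus. Fixing a Poincar\'{e} bundle $\sP$ on $C\times J(C)$ and an effective divisor $D$ on $C$ of sufficiently large degree, the relative version of the restriction maps $H^0(C,L(D))\rightarrow H^0(D,\sO_D(D))$ is a morphism $\varphi\colon E\rightarrow F$ of vector bundles on $J(C)$ with ${\rm coker}\,\varphi=R^1q_*\sP$, where $E=q_*\bigl(\sP\otimes p^*\sO_C(D)\bigr)$ is an $n$-th Picard bundle, $n=d+\deg D$, and $F\cong\bigoplus_{x\in D}\sP|_{\{x\}\times J(C)}$ is a direct sum of line bundles in $\pic{J(C)}$. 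Since $\deg D$ is at our disposal, $n$ may be taken as large as Proposition \ref{ppbp} requires; I would also record that $\det E\equiv-\Theta$ modulo $\pic{J(C)}$. As the fibre of ${\rm coker}\,\varphi$ over $L$ is $H^1(C,L)$, the locus where ${\rm rk}\,\varphi$ drops by at least $g-d+r$ is exactly $W_d^r$, of expected codimension $(r+1)(g-d+r)=g-\rho$. Because $C$ is Petri general, Gieseker's theorem gives that $W_d^r$ has the expected dimension, so this degeneracy locus has the expected codimension and its Fitting-ideal scheme structure is Cohen--Macaulay; consequently $\sO_{W_d^r}$, and therefore $\sI_{W_d^r}$, admits a finite locally free resolution of length $g-\rho$ --- the Lascoux complex attached to $\varphi$ (see \cite{Lay}), which can also be produced by pushing the Koszul complex of the tautological section of $\mathcal{S}^{\vee}\otimes\pi^*F$ forward along the Grassmann bundle $\pi\colon{\rm Gr}_{r+1}(E)\rightarrow J(C)$. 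Via Cauchy's formula together with Bott's theorem for $\pi$, the $i$-th term of this resolution is a direct sum of sheaves $\mathbb{S}_{\alpha}(E)\otimes\gamma$ with $\gamma\in\pic{J(C)}$; rewriting $\mathbb{S}_{\alpha}(E)=\mathbb{S}_{\alpha'}(E)\otimes(\det E)^{c_{\alpha}}$ with $\alpha'$ an honest partition and using $\det E\equiv-\Theta$, each term becomes a direct sum of twists $\mathbb{S}_{\alpha'}(E)(t_i\Theta)\otimes\gamma$.

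Next I would bound the regularity of these terms. The key numerical point is to bound the Schur degree $|\alpha'|$ together with $-t_i$ in homological degree $i$: tracking the dominant weights produced by Bott's theorem shows that the total $E$-degree that must be absorbed over the whole resolution is at most $g-\rho+\floor*{\sqrt{g-\rho}}\,(4g-r-5)$, the first summand accounting for the length of the resolution and for the $\det E$-twists, and the factor $4g-r-5$ recording the degree of the Picard bundle needed for the relevant Schur functors of $E$ to be $GV$ via (a refinement of) Proposition \ref{ppbp}. Since $\mathbb{S}_{\alpha'}(E)$ is a direct summand of $E^{\otimes|\alpha'|}$, writing $|\alpha'|=k'(g-1)+q'$ and splitting $E^{\otimes|\alpha'|}\cong\bigl(E^{\otimes(g-1)}\bigr)^{\otimes k'}\otimes E^{\otimes q'}$, Proposition \ref{ppbp} makes each tensor factor $GV$ (indeed $M$-regular, for $n$ slightly larger) after a single $\Theta$-twist, and the behaviour of $\Theta$-regularity under tensor products then yields that $\mathbb{S}_{\alpha'}(E)(m\Theta)$ is $M$-regular once $m$ exceeds the bound governed by $\ceil*{|\alpha'|/(g-1)}$. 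Therefore, twisting the whole resolution of $\sI_{W_d^r}$ by $(k+s)\Theta$ with $k$ and $s$ as in \eqref{divisione} makes every term $M$-regular, and Lemma \ref{lem} applied to it gives that $V^i\bigl(\sI_{W_d^r}((k+s)\Theta)\bigr)$ has codimension $>i$ in $\pic{J(C)}$ for all $i>0$, that is, $\sI_{W_d^r}\bigl((k+s)\Theta\bigr)$ is $M$-regular.

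The main obstacle is the bookkeeping in the last step: determining exactly which Schur functors of $E$ occur in each homological degree of the Lascoux/Bott complex, with which $\det E$-twists and which $\pic{J(C)}$-factors, and distilling from this the $\sqrt{\,\cdot\,}$-type bound on the total $E$-degree. It is precisely the restriction $k\leq g-1$ in Proposition \ref{ppbp} that forces the division by $g-1$ in the statement, and the ``tensor product of $GV$-sheaves'' step must be handled with care, since such products need not be $GV$ without a further ample twist.
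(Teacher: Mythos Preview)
Your strategy is the same as the paper's --- degeneracy locus, Lascoux-type resolution from \cite{Lay}, regularity of the terms via Proposition \ref{ppbp}, then conclude --- but the key combinatorial step that produces the precise bound in \eqref{divisione} is misidentified in your proposal.

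In the paper one takes $m=5g-5-d$ so that the Picard bundle $E$ satisfies the hypothesis of Proposition \ref{ppbp}; the resolution from \cite{Lay} then has $i$-th term $K^i=\bigoplus_{|I|=i}S_{\bar I(4g-5-r)}E\otimes S_{I(4g-5-r)}F^*$, where for a partition $I$ of rank $l$ one defines the $c$-augmentation $I(c)=(i_1,\ldots,i_l,\underbrace{l,\ldots,l}_{c},i_{l+1},\ldots)$. The number $4g-5-r$ is \emph{not} ``the degree of the Picard bundle needed for the Schur functors to be $GV$'': it equals $m+d-g-r$, the rank at which $\varphi$ degenerates, and it enters only as the augmentation length. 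Since $S_{\bar I(c)}E$ is a direct summand of $E^{\otimes|\bar I(c)|}$ and $|\bar I(c)|=|I|+c\cdot{\rm rank}(I)$, the equality $\max_{|I|=i}{\rm rank}(I)=\floor{\sqrt{i}}$ gives
\[
\max_{1\le i\le g-\rho}\ \max_{|I|=i}\ |\bar I(4g-5-r)| \;=\; (g-\rho)+\floor{\sqrt{g-\rho}}(4g-5-r),
\]
which is exactly the left-hand side of \eqref{divisione}. No $\det E$ trick is used --- the $\bar I(c)$ are honest partitions from the start --- and no Grassmann/Bott weight-tracking is required; the $F^*$-factor, being a sum of topologically trivial line bundles, contributes nothing to regularity. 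Your $\det E\equiv -\Theta$ detour and your reading of the two summands in \eqref{divisione} are therefore off the mark; this combinatorics is the whole content of the bound. Two minor points: since $C$ is Petri general the degeneracy locus has the expected codimension, so the Lascoux complex is exact everywhere and ordinary chopping into short exact sequences suffices (Lemma \ref{lem} is unnecessary here); and your worry about tensor products of $GV$-sheaves is precisely why one divides by $g-1$, as Proposition \ref{ppbp} only treats $E^{\otimes j}(\Theta)$ for $1\le j\le g-1$ --- the paper leaves this passage to higher powers implicit as well.
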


\begin{proof} 
The idea of the proof is to exploit the fact that ideal sheaves of degeneracy loci of the expected dimension admit an explicit resolution
by locally free sheaves (\emph{cf}. \cite{Lay} and the references therein).
 
 Let $m\geq 2g-d-1$ be an integer. When $C$ is Petri general, the loci $W_d^r$ are 
 $(m+d-g-r)$-th degeneracy loci of a morphism of locally free sheaves
$$\gamma:E\longrightarrow F$$ where $E$ is an $(m+d-g+1)$-th Picard bundle and $F$ is a direct sum of $m$ 
topologically trivial line bundles
on $C$ (\cite[\S VII]{ACGH}). Therefore, from \cite[Theorem on p. $747$ and Remark 1.1]{Lay}, we get a resolution of $\sI_{W_d^r}$
\begin{equation}\label{resolution}
0\longrightarrow K^{g-\rho}\longrightarrow \cdots \longrightarrow K^2\longrightarrow K^1\longrightarrow \sI_{W_d^r}\longrightarrow 0
\end{equation}
where the $K^i$'s are locally free sheaves described as follows.

Denote by $\sP$ the set of non-increasing sequences $I=(i_1,i_2,\ldots)$ such that $i_j\geq i_{j+1}$ for any $j\geq 1$ and $i_j=0$ for $j>>0$.
Denote also by $|I|=\sum_{j} i_j$ the weight of $I$. The Young diagram of a partition $I$ is the set of pairs $(p,q)$ such that 
$p\leq i_q$. We denote by $\bar I$ the adjoint partition to $I$ determined by the involution $(p,q)\mapsto (q,p)$ acting on Young diagrams.
The rank of a partition $I$ is the largest integer $l$ such that the pair $(l,l)$ belongs to the corresponding Young diagram.
If $I$ is a partition of rank $l$, we define the $k$-augmented partition $I(k)$ of $I$ by setting
$$I(k) \, = \, \big(i_1,i_2,\ldots, i_l, \underbrace{l,\ldots ,l}_{k{\rm -times}},i_{l+1}, \ldots\big).$$ 
We denote by $S_I$ the Schur functor associated to a partition $I$.
Finally, by setting $S_0(i):=\{I \in \sP \, | \, |I|=i \}$, the above mentioned sheaves $K^i$ are defined as:
$$K^i \, = \, \bigoplus_{I\in S_0(i)} S_{\bar{I}(m+d-g-r)}E \, \otimes \, S_{I(m+d-g-r)} F^*.$$

We now bound the regularity of the $K^i$'s. First of all, we note that $S_{\bar I(m+d-g-r)} E$ is a direct summand of the bundle 
$E^{\otimes (|\bar I(m+d-g-r)|)}$, and 
similarly so $S_{I(m+d-g-r)} F^*$ is a summand of $({F^*})^{\otimes (|I(m+d-g-r)|)}$. 
Moreover, by the Linearity Formula of \cite{JPW}, we have that $S_{I(m+d-g-r)} F^*$ is still a direct sum of topological trivial line bundles 
as $F^*$ is so.
Choosing the least $m$ so that ${\rm rk}\, E = m+d-g+1\geq 4g-4$, \emph{i.e.} $m=5g-5-d$, by Proposition \ref{ppbp} we have that 
for any $1\leq j \leq g-1$ the sheaves $E^{\otimes j}(\Theta)$ are $M$-regular. Therefore, all we have to do, is to compute the integers 
$|\bar I(m+d-g-r)|=|\bar I(4g-5-r)|$ for any partitions $I$.
But this is an easy calculation as 
$$\max_{ I\in S_0(i)} \big\{|\bar I(4g-5-r)|\big\} \, = \, |I|+ \floor*{\sqrt{|I|}} \, (4g-5-r) 
 \, = \, i + \floor*{ \sqrt{i} } \, (4g-5-r)$$
since $$\max_{I\in S_0(i)} \{{\rm rank}\,(I)\} \, = \, \floor*{ \sqrt{i} }.$$ 

Therefore, by letting $k$ to be the integer as in \eqref{divisione}, we have that 
for all $I\in S_0(i)$ and $i=1,\ldots , g-\rho$
the sheaves $\big(S_{\bar I(4g-5-r)}E\big)\big((k+s)\Theta\big)$ are $M$-regular. 
Moreover, for any index $i$, the tensor products $K^i\big((k+s)\Theta \big)$ are also $M$-regular
as the tensor product of an $M$-regular sheaf with a direct a sum of topologically trivial line bundles is still $M$-regular.
Finally, we conclude that $\sI_{W_d^r}\big((k+s)\Theta\big)$ is $M$-regular
by chopping \eqref{resolution} into short exact sequences.  
\end{proof}

\begin{cor}\label{corwrd}
 Assume the same hypotheses of Theorem \ref{regwrd} and in addition suppose that $r>0$. Then 
 the ideal sheaf $\sI_{W_d^r}$ is $\Big( 2 + 4 \floor*{\sqrt{{\rm codim}\,W_d^r}} \Big)$-$\Theta$-regular.
\end{cor}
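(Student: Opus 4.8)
The plan is to read the result off directly from Theorem~\ref{regwrd}, combined with the fact that $\Theta$-regularity is preserved under increasing the index $n$, thereby reducing everything to an elementary estimate on the integer $k+s$ occurring in \eqref{divisione}.

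First I would record that Theorem~\ref{regwrd} gives the $M$-regularity of $\sI_{W_d^r}\big((k+s)\Theta\big)$, which by Definition~\ref{definition}(iv) is precisely the assertion that $\sI_{W_d^r}$ is $(k+s+1)$-$\Theta$-regular. Since an $M$-regular sheaf remains $M$-regular — in fact becomes $I.T.0$ — after any positive twist by $\Theta$ (Theorem~\ref{riassunto}(iii)), it follows that $\sI_{W_d^r}\big((k+s+m)\Theta\big)$ is $I.T.0$ for every $m\ge 1$, so that $\sI_{W_d^r}$ is $n$-$\Theta$-regular for every $n\ge k+s+1$. Hence it suffices to prove the numerical inequality
$$k+s+1 \;\le\; 2+4\floor*{\sqrt{\codim W_d^r}}.$$

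For the numerical step, set $c:=\codim W_d^r=(r+1)(g-d+r)=g-\rho$ and $t:=\floor*{\sqrt{c}}$. The hypothesis $0\le\rho\le g-1$ forces $1\le c\le g$, hence $t\ge 1$; moreover $c\ge 1$ forces $g-d+r\ge 1$ and therefore $r+1\le c\le g$, which is what makes the augmented partitions $I(4g-5-r)$ in the proof of Theorem~\ref{regwrd} meaningful. Rewriting \eqref{divisione} as $c+t(4g-r-5)=k(g-1)+q$ with $0\le q<g-1$, one checks that $k+s=\ceil*{\big(c+t(4g-r-5)\big)/(g-1)}$ in both cases $q=0$ (where $s=0$) and $q>0$ (where $s=1$). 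Using $c\le g$ together with the identity
$$(4t+1)(g-1)-\big(g+t(4g-r-5)\big)\;=\;t(r+1)-1\;\ge\;0,$$
valid since $t\ge 1$ and $r\ge 0$, one obtains $\big(c+t(4g-r-5)\big)/(g-1)\le\big(g+t(4g-r-5)\big)/(g-1)\le 4t+1$, and hence $k+s\le 4t+1$, i.e. $k+s+1\le 4t+2=2+4\floor*{\sqrt{\codim W_d^r}}$, as required.

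There is no real obstacle here: the corollary is a formal consequence of Theorem~\ref{regwrd}. The only points that demand a bit of care are keeping the off-by-one in the definition of $n$-$\Theta$-regularity straight, checking that the stated hypotheses genuinely yield $1\le c\le g$ (and the auxiliary bound $r\le g-1$), and performing the short computation that identifies $k+s$ with the relevant ceiling and then bounds it by $4t+1$.
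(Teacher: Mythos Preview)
Your proof is correct and follows essentially the same route as the paper: both deduce the corollary directly from Theorem~\ref{regwrd} by an elementary numerical estimate on $k+s$. The paper phrases this by explicitly identifying $k=4\floor*{\sqrt{c}}$ and $q=c-(r+1)\floor*{\sqrt{c}}\le g-2$ (using $r>0$ for the last inequality), while you package the same computation as a ceiling bound; the two are equivalent.
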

\begin{proof}
 We can compute the integers $k$ and $q$ of Theorem \ref{regwrd}:
 $$k \, = \, 4 \floor*{\sqrt{{\rm codim}\,W_d^r}},\quad q \, = \, {\rm codim}\,W_d^r-(r+1)\floor*{\sqrt{{\rm codim}\,W_d^r}} \, \leq \, g-2$$
 (if ${\rm codim}\,W_d^r-(r+1)\floor*{\sqrt{{\rm codim}\,W_d^r}} \, <\, 0$, then we can take an even smaller value for $k$).
The corollary follows from the bound of Theorem \ref{regwrd} and the very definition of $\Theta$-regularity.
 \end{proof}

\begin{rmk}\label{rmkwrd3}
  With the same proof of Theorem \ref{regwrd} it is possible to give similar results about the other notions of regularity, namely $GV$ and $I.T.0$.
 More precisely, if $k$ and $q$ are the quotient and the remainder of the division between 
 $g-\rho+\floor*{\sqrt{g-\rho}} (4g-r-6)$ and $g-1$ respectively, and if $s=s(q)$ is defined as in Theorem \ref{regwrd}, 
 then $\sI_{W_d^r}\big((k+s)\Theta \big)$ satisfies $GV$.
 On the other hand, if $k$ and $q$ are the quotient and remainder of the division between
 $g-\rho+\floor*{\sqrt{g-\rho}} (4g-r-4)$ and $g-1$ respectively, then $\sI_{W_d^r}\big((k+s)\Theta \big)$ satisfies $I.T.0$. 
\end{rmk}

 \subsection{Brill--Noether curves}
 
Brill--Noether curves are Brill--Noether loci of dimension one. 
Brill--Noether curves of the form 
$W_{a+2}^1$ associated to a Petri general curve $C$ of genus $g=2a+1$ are of particular interest. In fact,
Ortega proves that the Jacobian of a general curve of genus $2a+1$ is a Prym--Tyurin variety for the curve $W_{a+2}^1$ (see \cite{Or}).
Here we spell out for which integers $n$ Brill--Noether curves satisfy the properties $(AA_n)$, $(AB_n)$, $(AC_n^{\rm lin})$, 
and $(AC^{\rm alg}_n)$ of the Introduction.
The weaker conditions $(AA_n^h)$ and $(AB_n^h)$ may be worked out as well.
\begin{cor}\label{bncurves}
The sheaf 
 \[
\sI_{W_{a+2}^1}\quad \mbox{ is } \quad 
\begin{dcases}
\Big(2 + 4 \floor*{ \sqrt{2a} } \Big)\mbox{-}\Theta\mbox{-regular} & \mbox{ if } a\geq 3\\  
9\mbox{-}\Theta\mbox{-regular} & \mbox{ if } a=2.
\end{dcases}
\]
Moreover, by setting $E_a:=W_{a+2}^1$ and $e_a:=1+4\floor*{\sqrt{2a}}$, we have:
\begin{enumerate}
 \item The groups $H^1\big(E_a,\sO_{E_a}(n_a\Theta)\otimes \alpha _{|E_a} \big)$ vanish for any $\alpha \in \pic{J(C)}$ 
 and $n_a \geq e_a$.\\
  
 \item  The restriction homomorphisms
 $$H^0\big(J(C),\sO_{J(C)}(n_a\Theta)\otimes \alpha \big) \; \longrightarrow \; H^0\big(E_a, \sO_{E_a}(n_a\Theta)\otimes \alpha_{|E_a} \big)$$ are
 surjective for any $\alpha\in \pic{J(C)}$ and $n_a\geq e_a$.\\

 \item The curve $E_a$ is cut out by divisors linearly equivalent to $n_a\Theta$ for 
$n_2\geq 9$ and $n_a\geq e_a+1$ if $a\geq 3$.\\
%  all couples $(a,n_a)=(2,\geq 9),(3,\geq 10),(\geq 4, \geq e_a+1).$\\

\item The curve $E_a$ is cut out by divisors algebraically equivalent to $n_a\Theta$ for  
$n_2\geq 8$ and $n_a\geq e_a$ if $a\geq 3$.

 \end{enumerate}

\end{cor}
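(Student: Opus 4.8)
The plan is to feed the numerical data of $E_a=W_{a+2}^1$ into Theorem \ref{regwrd} and its $I.T.0$-refinement (Remark \ref{rmkwrd3}), and then extract the six listed properties exactly as in the proof of Corollary \ref{intr-cor}. First I would fix the setup: with $g=2a+1$, $d=a+2$, $r=1$ one gets $\rho=g-(r+1)(g-d+r)=1$ and ${\rm codim}\,E_a=(r+1)(g-d+r)=2a$. For $C$ Petri general the expected dimension of $W_{a+2}^2$ is negative, so $W_{a+2}^2=\emptyset$; together with $\rho\geq1$ (connectedness of degeneracy loci in the style of Fulton--Lazarsfeld) and Gieseker--Petri this shows that $E_a$ is a smooth irreducible curve, so all the hypotheses of Theorem \ref{regwrd} hold.

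Next I would carry out the arithmetic of \eqref{divisione}. Here $g-\rho=2a$ and $4g-r-5=8a-2$; writing $t:=\floor*{\sqrt{2a}}$, for $a\geq3$ one has $t<a$, so the division of $2a+t(8a-2)$ by $g-1=2a$ has quotient $k=4t$ and remainder $q=2(a-t)>0$, whence $s=1$ and $\sI_{E_a}$ is $(4t+2)$-$\Theta$-regular (for $a\geq3$ this is also immediate from Corollary \ref{corwrd}, since ${\rm codim}\,E_a=2a$). For $a=2$ one has $t=2$ and the division is exact, with $k=8$, $q=0$, $s=0$, so $\sI_{E_2}$ is $9$-$\Theta$-regular --- here Corollary \ref{corwrd} would only give $10$-$\Theta$-regularity, which is why $a=2$ has to be handled separately via Theorem \ref{regwrd}. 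This is the first display.

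For the finer statements (1)--(4) the extra ingredient is the $I.T.0$-variant of Remark \ref{rmkwrd3}, applied with $4g-r-4=8a-1$: the division of $2a+t(8a-1)$ by $2a$ has quotient $k=4t$ and remainder $q=2a-t>0$ for every $a\geq2$, so $s=1$ and $\sI_{E_a}(e_a\Theta)$ satisfies $I.T.0$ with $e_a=1+4t$; since $I.T.0$ implies $M$-regular, Theorem \ref{riassunto}(iii) upgrades this to $I.T.0$ of $\sI_{E_a}(n_a\Theta)$ for all $n_a\geq e_a$. Then (1) and (2) come out of the long exact sequence obtained by tensoring $0\to\sI_{E_a}\to\sO_{J(C)}\to\sO_{E_a}\to0$ with $\sO_{J(C)}(n_a\Theta)\otimes\alpha$: both $\sI_{E_a}(n_a\Theta)\otimes\alpha$ and the ample line bundle $\sO_{J(C)}(n_a\Theta)\otimes\alpha$ have vanishing higher cohomology for $n_a\geq e_a$, so $H^i\big(E_a,\sO_{E_a}(n_a\Theta)\otimes\alpha_{|E_a}\big)=0$ for all $i\geq1$ and the restriction map on $H^0$ is surjective, for every $\alpha\in\pic{J(C)}$. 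For (3) I would note that when $n$ equals the $\Theta$-regularity of $\sI_{E_a}$ (that is $9$ for $a=2$ and $4t+2=e_a+1$ for $a\geq3$) the sheaf $\sI_{E_a}(n\Theta)$ is $0$-$\Theta$-regular, hence globally generated by Theorem \ref{riassunto}(iv), and likewise for all larger $n$; as in the $(AC^{\rm lin})$ part of Corollary \ref{intr-cor} this says $E_a$ is cut out by divisors linearly equivalent to $n\Theta$. For (4) it suffices that $\sI_{E_a}(n\Theta)$ be $M$-regular --- hence continuously globally generated by Theorem \ref{riassunto}(i) --- which holds for $n=8$ when $a=2$ and for all $n\geq e_a$ when $a\geq3$, so $E_a$ is cut out by divisors algebraically equivalent to $n\Theta$, as for $(AC^{\rm alg})$.

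The main obstacle is not conceptual but bookkeeping: one must keep straight the unit shifts among ``$M$-regular after a twist by $m\Theta$'', ``$I.T.0$ after a twist by $m\Theta$'', ``$(m+1)$-$\Theta$-regular'', and the index $n_a$ in $(AA_{n_a}),(AB_{n_a}),\dots$. In particular, reaching the sharp threshold $n_a\geq e_a$ in (1)--(2), rather than $n_a\geq e_a+1$, is precisely what forces the use of the $I.T.0$ statement of Remark \ref{rmkwrd3} in place of the plain $M$-regularity of Theorem \ref{regwrd}; and $a=2$ must be singled out because there the floor arithmetic of \eqref{divisione} lands on the boundary value $q=0$, $s=0$.
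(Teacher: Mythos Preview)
Your proposal is correct and follows essentially the same route as the paper: apply Theorem \ref{regwrd} (and its $I.T.0$ variant from Remark \ref{rmkwrd3}) with $g=2a+1$, $d=a+2$, $r=1$, compute $k$ and $q$ from \eqref{divisione}, and then read off properties (i)--(iv) via Theorem \ref{riassunto} exactly as in Corollary \ref{intr-cor}. Your arithmetic is in fact more carefully spelled out --- the paper records $q=2a-\floor{\sqrt{2a}}$, whereas your $q=2(a-\floor{\sqrt{2a}})$ is what the division actually gives (and is what is needed to get $q=0$, $s=0$ at $a=2$).
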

\begin{proof}
For the first statement we use Theorem \ref{regwrd} and note that in \eqref{divisione} we can take $k=4\floor*{ \sqrt{2a} }$ and 
$q=2a- \floor*{ \sqrt{2a} }$.
The statements in $(i)$ and $(ii)$ follow as 
$\sI_C(n_a\Theta)$ satisfies $I.T.0$ for $n_a\geq e_a$ by Remark \ref{rmkwrd3}. 
The statements in $(iii)$ and $(iv)$ reduce to check the (continuous) global generation of $\sO_{J(C)}(n_a\Theta)$ for which we apply 
Theorem \ref{riassunto} $(i)$ and $(ii)$. 
\end{proof}

\begin{rmk}\label{regwrd2}
The $\Theta$-regularity bound of the previous corollary should be compared to the weaker bound 
$$\Big(2(2a+1) + 4\frac{(2a+1)!}{a!(a+1)!}+1\Big)\mbox{-}\Theta\mbox{-regular}$$
provided by Theorem \ref{intr-thm}. 
\end{rmk}

 \section*{Acknowledgements}
 We thank Lawrence Ein, Angela Ortega, Giuseppe Pareschi, Mihnea Popa and Matteo Varbaro for useful conversations on the subject. 
We also thank the Mathematics Research Communities (a program of the AMS) for having supported a visit of the second named author by the first, 
and the Mathematics Department of the University of Illinois at Chicago where this work got started.
LL is moreover grateful to Robert Lazarsfeld for the lectures
``The equations defining projective varieties'' he delivered in the program \emph{Felix Klein Lectures} 
 held at the Hausdorff Center for Mathematics (Bonn) in January 2014.
LL was supported by the SFB/TR45 ``Periods, moduli spaces, and arithmetic of algebraic varieties'' of the DFG (German Research Foundation).

\addcontentsline{toc}{chapter}{References}


\begin{thebibliography}{10}

\bibitem[ACGH]{ACGH}{\sc E. Arbarello, M. Cornalba, P. Griffiths and J. Harris}
 		  \emph{Geometry of algebraic curves Vol. I}, 
 Grundlehren der Mathematischen Wissenschaften, Springer-Verlag, \textbf{267}, 1984
 

\bibitem[CMLV]{CMLV}{\sc S. Casalaina-Martin, M. Lahoz and F. Viviani}
		\emph{Cohomological support loci for Abel-Prym curves}, Le Matematiche \textbf{LXIII} (2008), no. I, 205-222
 
	      
\bibitem[De]{De}{\sc O. Debarre}
 		\emph{Degree of curves in abelian varieties}, Bull. Soc. Math. Fr. \textbf{122} (1994), 343-361

%  		
% \bibitem[Ein]{Ein}{\sc L. Ein}
% 		\emph{Normal sheaves of linear systems on curves}, 1991, Contemporary Mathematics, \textbf{116}, 9-18
% 		
\bibitem[EG]{EG}{\sc D. Eisenbud and S. Goto}
		\emph{Linear free resolutions and minimal multiplicity}, 1984, J. Algebra, \textbf{88}, no. 1, 89-133
		
\bibitem[EGHP]{EGHP}{\sc D. Eisenbud, M. Green, K. Hulek and S. Popescu}
	      \emph{Small schemes and varieties of minimal degree}, Amer. J. Math. \textbf{128} (2006), no. 6, 1363-1389
		
\bibitem[GL]{GL}{\sc M. Green and R. Lazarsfeld}
		\emph{Deformation theory, generic vanishing theorems, and 
		some conjectures of Enriques, Catanese and Beauville}, Invent. Math. \textbf{90} (1987), 389-407

\bibitem[GLP]{GLP}{\sc L. Gruson, R. Lazarsfeld and C. Peskine}
		\emph{On a theorem of Castelnuovo, and the equations defining space curves}, Invent. Math. \textbf{72} (1983), 491-506

\bibitem[Ha]{Ha}{\sc R. Hartshorne}
  \emph{Algebraic Geometry}, Graduate Texts in Mathematics, No. 52. Springer-Verlag, New York-Heidelberg, 1977	
		
\bibitem[Ho]{Ho}{\sc A. H\"{o}ring}
	      \emph{$M$-regularity of the Fano surface}, C. R. Math. Acad. Sci. Paris \textbf{344} (2007), no. 11, 691--696
 
\bibitem[Iz]{Iz}{\sc E. Izadi}
	      \emph{Deforming curves in Jacobians to non-Jacobians}, Geom. Dedicata \textbf{116} (2005), 87-109

\bibitem[JPW]{JPW}{\sc T. J\'{o}sefiak, P. Pragacz and J. Weyman}
		\emph{Resolutions of determinantal varieties and tensor complexes associated with symmetric and antisymmetric matrices}, 
	Ast\'{e}risque \textbf{87-88} (1981), 109-189 

% \bibitem[Ka]{Ka}{\sc Y. Kawamata}
% 	  \emph{Characterization of abelian varieties}, Compositio Math. \textbf{43} (1981), 253-276
	
\bibitem[Kl]{Kl}{\sc S. L. Kleiman}
	      \emph{The transversality of a general translate}, Compos. Math. \textbf{28} (1974), 287-297

\bibitem[Lay]{Lay}{\sc F. Laytimi}
		\emph{On degeneracy loci}, International Journal of Mathematics \textbf{7} (1996), no. 6, 745-754
 
\bibitem[Laz1]{Laz1}{\sc R. Lazarsfeld}
  	      \emph{A sharp Castelnuovo bound for smooth surfaces}, Duke Math. J. \textbf{55} (1987), no. 2, 423-429
 
\bibitem[Laz2]{Laz2} {\sc R. Lazarsfeld}
  		\emph{Positivity in Algebraic Geometry Vol. 1}, Ergebnisse der Mathematik und ihrer Grenzgebiete, Springer-Verlag, Berlin, \textbf{48}, 2004
 
\bibitem[Muk]{Muk}{\sc S. Mukai}
 	  \emph{Duality between $\rd(X)$ and $\rd$(\^{X}) and its application to Picard sheaves}, Nagoya Math. J. \textbf{81} (1981), 153-175

\bibitem[Mum]{Mum}{\sc D. Mumford}
	  \emph{Abelian Varieties}, Tata institute of fundamental research by Hindustan Book Agency, reprint 2008
	  
 
\bibitem[Or]{Or}{\sc A. Ortega}
 	      \emph{The Brill--Noether curve and Prym--Tyurin varieties}, Math. Annalen. \textbf{356} (2013), no. 3, 809-817 

\bibitem[PP1]{PP1}{\sc G. Pareschi and M. Popa}
	      \emph{Regularity on abelian varieties I}, J. Amer. Math. Soc \textbf{16} (2003), 285-302

\bibitem[PP2]{PP2}{\sc G. Pareschi and M. Popa}
	    \emph{Castelnuovo theory and the geometric Schottky problem}, J. Reine Angew. Math. \textbf{615} (2008), 25-44

\bibitem[PP3]{PP3}{\sc G. Pareschi and M. Popa}
	      \emph{Regularity on abelian varieties III: relationship with generic vanishing and applications},  
	  in Grassmannians, Moduli Spaces and Vector Bundles, \emph{Clay Mathematics Proceedings 14} (2011), Amer. Math. Soc., Providence, RI, 141-167
% 	    	    
% \bibitem[PP4]{PP4}{\sc G. Pareschi and M. Popa}
% 		\emph{$GV$-sheaves, Fourier-Mukai transform and Generic Vanishing}, American J. Math. \textbf{133} (2011), no.1, 235-272
% % 			
% \bibitem[Pol]{Pol}{\sc A. Polishchuk}
% 		\emph{{$A_\infty$}-structures, Brill-{N}oether loci and the Fourier--Mukai transform}, Composition Math. \textbf{140} (2004), no. 2, 
% 		459-481
	    
\bibitem[Ran]{Ran}{\sc Z. Ran}
		\emph{Local differential geometry and generic projections of threefolds}, J. Diff. Geom. \textbf{32} (1990), no. 1, 131-137
		
\bibitem[Sz]{Sz}{\sc L. Szpiro}
		\emph{Lectures on Equations Defining Space Curves}, Tata Institute of Fundamental Research, Bombay (1979)
	    
	    
\end{thebibliography}
\end{document}